\newcommand{\bigslant}[2]{{\raisebox{.2em}{$#1$}\left/\raisebox{-.2em}{$#2$}\right.}}
\newcommand{\kf}[2]{{\begin{pmatrix} #1 \\ #2 \end{pmatrix}}}
\newcommand{\D}{{\mathbb  D}}
\newcommand{\T}{\mathbb{T}}
\newcommand{\Z}{{\mathbb  Z}}
\newcommand{\N}{{\mathbb  N}}
\newcommand{\C}{{\mathbb  C}}
\newcommand{\OZ}{{\mathbf{O}}}
\newcommand{\ID}{{\mathbf{1}}}
\newcommand{\OID}{{\mathbf{I}}}
\newcommand{\Hw}{\mathcal{G}}
\newcommand{\HI}{\mathcal{G}_\ast }
\newcommand{\HHw}{H^2(\mathcal X)}
\newcommand{\HHI}{\overline{H^2(\mathcal X)} }
\newcommand{\fdot}{\,\cdot\,}
\newcommand{\calC}{{\mathcal C}}
\newcommand{\cH}{\mathcal{H}}
\newcommand{\cD}{\mathcal{D}}
\newcommand{\cX}{\mathcal{X}}
\newcommand{\te}{\theta}
\newcommand{\Kft}{{\begin{pmatrix}H^2(\mathcal{E}_\ast)\\\bar\bigtriangleup L^2(\mathcal{E})\end{pmatrix}
\ominus \begin{pmatrix}\Theta\\\bigtriangleup\end{pmatrix} H^2(\mathcal{E})}}
\DeclareMathOperator{\clos}{clos}
\DeclareMathOperator{\spa}{span}
\DeclareMathOperator{\supp}{supp}
\newcommand{\ci}[1]{_{ {}_{\scriptstyle #1}}}
\newcommand{\ti}[1]{_{\scriptstyle \text{\rm #1}}}
\chardef\mathlig@atcode\count255
\def\actively#1#2{\begingroup\uccode`\~=`#2\relax\uppercase{\endgroup#1~}}
\def\mathlig@gobble{\afterassignment\mathlig@next@cmd\let\mathlig@next= }
\def\mathlig@delim{\mathlig@delim}
\def\mathlig@defcs#1{\expandafter\def\csname#1\endcsname}
\def\mathlig@let@cs#1#2{\expandafter\let\expandafter#1\csname#2\endcsname}
\def\mathlig@appendcs#1#2{\expandafter\edef\csname#1\endcsname{\csname#1\endcsname#2}}
\def\mathlig#1#2{\mathlig@checklig#1\mathlig@end\mathlig@defcs{mathlig@back@#1}{#2}\ignorespaces}
\def\mathlig@checklig#1#2\mathlig@end{%
 \expandafter\ifx\csname mathlig@forw@#1\endcsname\relax
 \expandafter\mathchardef\csname mathlig@back@#1\endcsname=\mathcode`#1%
 \mathcode`#1"8000\actively\def#1{\csname mathlig@look@#1\endcsname}%
 \mathlig@dolig#1\mathlig@delim
\fi
\mathlig@checksuffix#1#2\mathlig@end
}
\def\mathlig@checksuffix#1#2\mathlig@end{%
\ifx\mathlig@delim#2\mathlig@delim\relax\else\mathlig@checksuffix@{#1}#2\mathlig@end\fi
}
\def\mathlig@checksuffix@#1#2#3\mathlig@end{%
\expandafter\ifx\csname mathlig@forw@#1#2\endcsname\relax\mathlig@dosuffix{#1}{#2}\fi
\mathlig@checksuffix{#1#2}#3\mathlig@end
}
\def\mathlig@dosuffix#1#2{%
\mathlig@appendcs{mathlig@toks@#1}{#2}%
\mathlig@dolig{#1}{#2}\mathlig@delim
}
\def\mathlig@dolig#1#2\mathlig@delim{%
 \mathlig@defcs{mathlig@look@#1#2}{%
 \mathlig@let@cs\mathlig@next{mathlig@forw@#1#2}\futurelet\mathlig@next@tok\mathlig@next}%
 \mathlig@defcs{mathlig@forw@#1#2}{%
  \mathlig@let@cs\mathlig@next{mathlig@back@#1#2}%
  \mathlig@let@cs\checker{mathlig@chck@#1#2}%
  \mathlig@let@cs\mathligtoks{mathlig@toks@#1#2}%
  \expandafter\ifx\expandafter\mathlig@delim\mathligtoks\mathlig@delim\relax\else
  \expandafter\checker\mathligtoks\mathlig@delim\fi
  \mathlig@next
 }%
 \mathlig@defcs{mathlig@toks@#1#2}{}%
 \mathlig@defcs{mathlig@chck@#1#2}##1##2\mathlig@delim{%
  \ifx\mathlig@next@tok##1%
   \mathlig@let@cs\mathlig@next@cmd{mathlig@look@#1#2##1}\let\mathlig@next\mathlig@gobble
  \fi
  \ifx\mathlig@delim##2\mathlig@delim\relax\else
   \csname mathlig@chck@#1#2\endcsname##2\mathlig@delim
  \fi
 }%
%
 \ifx\mathlig@delim#2\mathlig@delim\else
  \mathlig@defcs{mathlig@back@#1#2}{\csname mathlig@back@#1\endcsname #2}%
 \fi
}%
\mathchardef\ordinarycolon\mathcode`\:
\def\vcentcolon{\mathrel{\mathop\ordinarycolon}}
\numberwithin{equation}{section}
\theoremstyle{plain}
\newtheorem{theo}{Theorem}[section]
\newtheorem{cor}[theo]{Corollary}
\theoremstyle{definition}
\theoremstyle{remark}
\newtheorem*{ex*}{Example}
\theoremstyle{remark}
\newtheorem*{exs*}{Examples}
\theoremstyle{remark}
\newtheorem*{rem}{Remark}
\theoremstyle{remark}
\newtheorem*{rems}{Remarks}
\title[Dilations and finite rank perturbations]{A geometric approach to finite rank unitary perturbations}
\author{Ronald~G.~Douglas}
\address{Department of Mathematics, Texas A\&M University, Mailstop 3368,      
 College Station, TX  77843, USA }
\email{rdouglas@math.tamu.edu}
\urladdr{http://www.math.tamu.edu/$\sim$ron.douglas}
\author{Constanze~Liaw}
\thanks{The second author is partially supported by the NSF grant DMS-1101477.}
\address{Department of Mathematics, Texas A\&M University, Mailstop 3368,      
 College Station, TX  77843, USA}
\email{conni@math.tamu.edu}
\urladdr{http://www.math.tamu.edu/$\sim$conni}
\keywords{Finite rank perturbations, rank one perturbations, dilation theory, normalized Cauchy transform}
 \subjclass[2010]{44A15, 47A20, 47A55}
\begin{document}

\begin{abstract}
For fixed $n\in\N$, we consider a family of rank $n$ unitary perturbations of a completely non-unitary contraction (cnu) with deficiency indices $(n,n)$ on a separable Hilbert space.

We relate the unitary dilation of such a contraction to its rank $n$ unitary perturbations. Based on this construction, we prove that the spectra of the perturbed operators are purely singular if and only if the operator-valued characteristic function corresponding to the unperturbed operator is inner. In the case where $n=1$ the latter statement reduces to a well-known result in the theory of rank one perturbations. However, our method of proof via the theory of dilations extends to the case of arbitrary $n\in\N$.

We find a formula for the operator-valued characteristic functions corresponding to a family of related cnu contractions.
In the case where $n=1$, the characteristic function of the original contraction we obtain a simple expression involving the normalized Cauchy transform of a certain measure. An application of this representation then enables us to control the jump behavior of this normalized Cauchy transform ``across'' the unit circle.
\end{abstract}

\maketitle

\section{Introduction}
Self-adjoint rank one perturbations occur naturally in mathematical physics \cite{weyl}. For example, a change in the boundary condition at the origin of a limit-point half-line Schr\"odinger operator from Dirichlet to Neumann, or to mixed conditions, can be reformulated as adding a rank one perturbation (see for example \cite{SIMREV}). Further, rank one perturbations have applications to the famous problem of Anderson localization \cite{SIMWOL}. And many fruitful connections with holomorphic composition operators, rigid functions and the Nehari interpolation problem have been discovered and exploited, see for example, \cite{poltsara2006}.

Although the perturbation is extremely restricted, rank one perturbations gave rise to a surprisingly rich theory, with the major difficulty being the instability of the singular part. In particular, the behavior of the so-called embedded singular continuous spectrum has proved notoriously hard to capture, see for example, \cite{Mixed, mypaper, SIMREV}.

Rank one unitary perturbations were generalized to higher rank unitary perturbations (for example, in \cite{BallLubin, Kapustin-Polt}), which describe the situation when the perturbed operator is not cyclic.

The problem at hand can be investigated in three settings: (a) In formulating the problem on an abstract Hilbert space, we can apply classical perturbation theory. (b) The subtleties of this very restrictive perturbation problem can be captured conveniently in its spectral representation, making the spectral measure the main object of interest. (c) The third perspective to finite rank unitary perturbations involves the model and dilation theory of Sz.-Nagy and Foia\c s, providing a rather geometric point of view. While we use all three approaches, the main result of this paper rests on (c) in order to obtain statements in the language of (b).

If the perturbed unitary operators have purely singular spectrum, then the situation is much simpler. Hence, it is useful to know exactly when this simpler case occurs.
For rank one perturbations, the following four statements are equivalent (see Theorems \ref{NTFAE} and \ref{t-delta} below for a more rigorous formulation):
\begin{itemize}
\item[(1)] The rank one unitary perturbations each have a purely singular spectrum.
\item[(2)] The corresponding cnu contraction operator $T$ is of class $C_{\fdot 0}$, or $(T^*)^n$ converges to the zero operator in the strong operator topology as $n\to \infty$.
\item[(3)] The corresponding functional model reduces to the ``one-valued model space'' instead of consisting of pairs of functions.
\item[(4)] The corresponding characteristic function for $T$ is inner; that is, the absolute value of its non-tangential boundary values on the unit circle equals $1$ a.e.~Lebesgue measure $m$.
\end{itemize}

In the higher rank setting, the analogue of (2) $\Leftrightarrow$ (4) was established by Ball--Lubin in \cite{BallLubin}, and the analogue of (3) $\Leftrightarrow$ (4) is trivial by virtue of the definition of the model space, see equation \eqref{e-KTe} below.

The main result of this paper, Theorem \ref{t-main}, provides the higher rank analogue of (1) $\Leftrightarrow$ (4), see Corollary \ref{c-main}.
The proof rests on relating the unitary dilation of the cnu contraction to its unitary perturbation, and then applying the result from dilation theory which states that the minimal unitary dilation of a cnu contraction is purely absolutely continuous.
A generalization of the ``classical'' proof of this equivalence from $n=1$ to $n>1$ does not seem to be straightforward.
However, it would still be desirable to extend the original proof to higher rank perturbations, since such an extension would likely yield the explicit density functions for the absolutely continuous part.

Let us briefly outline the larger framework of this paper while leaving the details and precise definitions to the preliminaries below.
Consider a cnu contraction on a separable Hilbert space with deficiency indices $(n,n)$. Via model theory, such an operator has a characteristic operator-valued function $\Theta$. Further, the operator is unitarily equivalent to the compression to the Sz.-Nagy--Foia\c s model space, $\mathcal{K}_\Theta$, of the operator obtained as the orthogonal direct sum of the shift operator on a vector-valued Hardy space with an absolutely continuous unitary operator, defined using $\Theta$, to be multiplication by the independent variable on the unit circle (see Theorem \ref{t-tte} below).

If we replace the action of the cnu contraction between its defect spaces by a unitary operator between the defect spaces, we will have constructed a rank $n$ unitary perturbation of the original cnu contraction. Since the defect spaces are $n-$dimensional, the choice of such a rank $n$ unitary perturbation is parametrized by the set of unitary $n\times n-$matrices, once each of the defect spaces has been identified with $\C^n$.

Section \ref{s-prelims} contains the preliminaries. In Subsection \ref{ss-GenMod}, we introduce the precise setup for rank $n$ unitary perturbations of a given cnu contraction which are the subject of study in Sections \ref{s-main} and \ref{s-reprtheta}.
In Section \ref{s-main} we take the minimal unitary dilation of the cnu contraction and explicitly express the unitary dilation using suitable orthogonal decompositions of its initial and range spaces. These particular decompositions allow us to construct the rank $n$ unitary perturbations of the cnu contraction by perturbing the unitary dilation with a rank $2n$ operator and then restricting the new operator to the smaller, original Hilbert space (on which the cnu contraction is defined). We then apply results from dilation theory.
In Section \ref{s-reprtheta}, a formula for the operator-valued characteristic functions of each operator in a related family of cnu contractions are derived by means of its representation from model theory.
In Section \ref{s-beta}, we focus on rank one unitary perturbations. The formula for the characteristic operator-valued function reduces to a characteristic function for each operator in the family  of rank one perturbations in which each is a cnu contraction. A simple formula for a representation of the characteristic function of the original cnu contraction is then derived. This formula involves the normalized Cauchy transform of a certain measure. Many corollaries are obtained. In particular, in Section \ref{s-jump} we capture the jump behavior of the latter normalized Cauchy transform ``across'' the points in the absolutely continuous spectrum of the perturbed unitary operator.

\section{Preliminaries}\label{s-prelims}

\subsection{Model theory and rank $n$ unitary perturbations}\label{ss-GenMod}
Consider Hilbert spaces $\mathcal{E}$ and $\mathcal{E}_\ast$ with $\dim(\mathcal{E}) = \dim(\mathcal{E}_\ast)=n<\infty$. Let $\Theta\in H^\infty(\mathcal{E}\to \mathcal{E}_\ast)$ with operator norm $\|\Theta(z)\|\ci{\mathcal{E} \to \mathcal{E}_\ast}\le 1$ for all $z\in\D$. Without loss of generality, we assume $\Theta(0)$ is the zero operator. Then the non-tangential boundary limits of $\Theta$ exist in the operator norm topology a.e.~Lebesgue $m$. We denote this limit also by $\Theta$; that is,
\[
\Theta(\xi) = \lim_{z\to \xi, z\in \D}\Theta(z)
\qquad \text{for }\xi \in \T
\text{ a.e.~}m.
\]
An operator-valued function $\Theta$ is said to be \emph{inner}, if $\Theta(\xi)$ is a unitary operator for $\xi\in\T$ a.e.~$m$.

The Sz.-Nagy--Foia\c s model space is given by
\begin{align}\label{d-Ktea}
\mathcal{K}_\Theta=\Kft,
\end{align}
where
\begin{align}\label{d-Kteb}
\bigtriangleup(\xi) = (\OID_\mathcal{E}-\Theta^\ast(\xi)\Theta(\xi))^{1/2} \text{ and }  \bar\bigtriangleup(\xi) = \left\{
\begin{array}{ll}
0, & \text{if }\bigtriangleup(\xi) = \OZ, \\ 1, & \text{if }\bigtriangleup(\xi) \neq\OZ
\end{array}
\right\}, \xi\in\T.
\end{align}
At this, $\OID_\mathcal{E}$ denotes the identity operator on $\mathcal{E}$.

Notice that there is a unitary transformation between $\mathcal{K}_\Theta$ as defined above and the quotient space
\[
\bigslant{(H^2(\mathcal{E}_\ast) \oplus \bar\bigtriangleup L^2(\mathcal{E}))}{\{\Theta f\oplus\bigtriangleup f: f\in H^2(\mathcal{E})\}}
\]
which intertwines the corresponding operators.

Clearly, the model space reduces to $\mathcal{K}_\Theta = H^2(\mathcal{E}_\ast) \ominus \Theta H^2(\mathcal{E})$ if and only if $\Theta$ is inner or $\bigtriangleup \equiv 0$.

Let $P_\Theta$ denote the orthogonal projection from $\begin{pmatrix}H^2(\mathcal{E}_\ast)\\ \bar\bigtriangleup L^2(\mathcal{E})\end{pmatrix}$ onto $\mathcal{K}_\Theta$. Further, consider the operators $T_z$ and $M_\xi^{\bar\bigtriangleup}$ acting as multiplication by the independent variables $z\in\D$ and $\xi\in\T$, respectively.
A \emph{completely nonunitary} (cnu) contraction is an operator $X$ that satisfies $\|X\|\le 1$ and is not unitary on any of its invariant subspaces.
One can show that the following contraction is cnu:
\begin{align}\label{d-cnu}
T_\Theta = P_\Theta \begin{pmatrix} T_z & \OZ\\ \OZ & M_\xi^{\bar\bigtriangleup} \end{pmatrix}
\qquad\text{on }\mathcal{K}_\Theta.
\end{align}

The class $C_{\cdot 0}$ consists of contractions $X$ such that $(X^\ast)^n$ converges to the zero operator in the strong operator topology. It is straightforward to see that $T_\Theta \in C_{\cdot 0}$ if and only if $\Theta$ is inner.

Let us explain briefly how the above construction allows the study of all cnu contractions on a separable Hilbert space. Consider a contraction $U_\OZ $ on a separable Hilbert space $\cH$. The \emph{defect operators} of $U_\OZ $ and $U_\OZ ^\ast$ are
\begin{align*}
D = (\OID- U_\OZ ^\ast U_\OZ )^{1/2}
\qquad
\text{and}
\qquad
D_\ast = (\OID- U_\OZ  U_\OZ ^\ast)^{1/2},
\end{align*}
respectively. And the \emph{defect spaces} are
\[
\cD := \clos D \cH\qquad\text{and}
\qquad
\cD_\ast := \clos D_\ast \cH,
\]
respectively.
The operator $U_\OZ $ is said to have \emph{deficiency indies} $(n,m)$, if $n = \text{dim}(\cD)$
and $m = \text{dim}(\cD_\ast)$, where $0\le m,n \le \aleph_0$ (that is, $m$ and $n$ are non-negative and countable).

\begin{theo}\label{t-tte}(see for example, page 253 of \cite{SzNF2010})
For any given cnu contraction $U_\OZ $ with deficiency indices $(n,n)$ on a separable Hilbert space $\cH$, there exists an operator-valued function $\Theta\in H^\infty (\cD\to\cD_\ast)$ such that
\[
(U_\OZ  \text{ on }\cH) \quad \cong \quad (T_\Theta \text{ on } \mathcal{K}_\Theta);
\]
that is, the operators are unitarily equivalent.
\end{theo}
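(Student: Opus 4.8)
The plan is to carry out the classical Sz.-Nagy--Foia\c s functional model construction, organized around the minimal unitary dilation of $U_\OZ$, which is the geometric object underlying the rest of the paper. Recall the defect operators $D=(\OID-U_\OZ^\ast U_\OZ)^{1/2}$ and $D_\ast=(\OID-U_\OZ U_\OZ^\ast)^{1/2}$ and the defect spaces $\cD=\clos D\cH$, $\cD_\ast=\clos D_\ast\cH$, which I identify with $\mathcal E$ and $\mathcal E_\ast$. First I would produce the candidate for $\Theta$ directly from $U_\OZ$ by the formula
\[
\Theta(z)=\Bigl[-U_\OZ+zD_\ast(\OID-zU_\OZ^\ast)^{-1}D\Bigr]\Big|_{\cD}
\colon\cD\to\cD_\ast,\qquad z\in\D,
\]
and check the two analytic properties that make it an admissible datum for \eqref{d-Ktea}: holomorphy on $\D$, immediate from the Neumann series for $(\OID-zU_\OZ^\ast)^{-1}$, and contractivity $\|\Theta(z)\|\ci{\cD\to\cD_\ast}\le 1$, which follows from a Julia-operator identity together with $\|U_\OZ\|\le 1$. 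This places $\Theta$ in $H^\infty(\cD\to\cD_\ast)$ and lets me form $\bigtriangleup$, $\bar\bigtriangleup$, the model space $\mathcal K_\Theta$, and the model operator $T_\Theta$ exactly as in \eqref{d-Kteb}--\eqref{d-cnu}.

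Next I would build the minimal unitary dilation $U$ of $U_\OZ$ on a space $\wt\cH\supseteq\cH$, so that $U_\OZ^k=P_\cH U^k|_\cH$ for $k\ge 0$ and $\wt\cH=\bigvee_{k\in\Z}U^k\cH$. The defect spaces reappear inside $\wt\cH$ as the wandering subspaces $L=\clos(U-U_\OZ)\cH$ and $L_\ast=\clos(U^\ast-U_\OZ^\ast)\cH$, each of dimension $n$, which generate a one-sided shift structure and a corresponding translation (Fourier) representation diagonalizing $U$ as multiplication by $\xi$ on $L^2$-spaces built over $\cD_\ast$ and $\cD$; under these the outgoing subspace becomes $H^2(\mathcal E_\ast)$ and the incoming analytic subspace becomes $H^2(\mathcal E)$. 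The decisive structural input — precisely where complete non-unitarity of $U_\OZ$ is used — is the theorem, cited in the paper, that the minimal unitary dilation of a cnu contraction is purely absolutely continuous; this guarantees the ambient space is an honest $L^2$-space and legitimizes the non-tangential boundary values of $\Theta$ appearing in \eqref{d-Kteb}.

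I would then identify $\Theta$ as the function intertwining the incoming and outgoing spectral representations of $U$ — the scattering function of the dilation — and check by direct computation that it agrees with the defect formula above. Writing both representations on the common space $\wt\cH$ simultaneously realizes $\cH$ as exactly the subspace on the right-hand side of \eqref{d-Ktea}: the outgoing representation supplies the $H^2(\mathcal E_\ast)$ component, the absolutely continuous residual part of $U$ supplies the $\bar\bigtriangleup L^2(\mathcal E)$ component, and the embedded image $\begin{pmatrix}\Theta\\\bigtriangleup\end{pmatrix}H^2(\mathcal E)$ of the incoming analytic subspace is what must be removed. This produces a unitary $\Phi\colon\cH\to\mathcal K_\Theta$; since $\Phi$ transports $U$ to multiplication by $\xi$ and $P_\cH$ to $P_\Theta$, it carries the compression $U_\OZ=P_\cH U|_\cH$ to the operator $T_\Theta$ of \eqref{d-cnu}, giving the claimed unitary equivalence.

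The main obstacle is showing that $\Phi$ is onto all of $\mathcal K_\Theta$, not merely an isometry into it — the completeness of the model. This is exactly where minimality of the dilation and the complete non-unitarity of $U_\OZ$ are both indispensable: minimality prevents spurious generating subspaces, while the absence of any reducing unitary summand forces the dilation to be absolutely continuous and forces its residual part to match $\bar\bigtriangleup L^2(\mathcal E)$ with no leftover singular piece. Getting this residual subspace to correspond correctly, rather than merely bookkeeping the two Hardy-space embeddings, is the delicate step and is what ultimately pins down the equivalence asserted in the statement.
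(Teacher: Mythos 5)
Your proposal is correct and follows essentially the same route as the paper, which offers no proof of its own for Theorem \ref{t-tte} but instead cites the classical Sz.-Nagy--Foia\c s construction (page 253 of \cite{SzNF2010}) that you reproduce: the characteristic function via the defect formula, the minimal unitary dilation with its wandering subspaces and Fourier representation, absolute continuity of the dilation (Theorem \ref{t-Schreiber}) to pin down the residual part as $\bar\bigtriangleup L^2(\mathcal{E})$, and minimality to get surjectivity of the model map. The only slight misattribution is that the non-tangential boundary values of $\Theta$ exist simply because $\Theta\in H^\infty(\cD\to\cD_\ast)$, not because the dilation is absolutely continuous; the cnu hypothesis is instead what makes the residual part match $\bar\bigtriangleup L^2(\mathcal{E})$ and the model exhaustive.
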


The function $\Theta$ is called the \emph{characteristic operator function} of the cnu contractions $U_\OZ $ and $T_\Theta$. One can show that the formula
\begin{align*}
\Theta(z) =  -U_\OZ  +z D_\ast (\OID - z U_\OZ ^*)^{-1} D
\end{align*}
holds, where $\Theta(z):\cD\to\cD_\ast$ and $\OID$ is the identity operator on $\cH$.

The characteristic operator function is unique up to equivalence. In other words, given the pairs $\Theta_1$, $T_{\Theta_1}$ and $\Theta_2$, $T_{\Theta_2}$, the operators $T_{\Theta_1}$ and $T_{\Theta_2}$ are unitarily equivalent (that is,
\[
(T_{\Theta_1} \text{ on } \mathcal{K}_{\Theta_1})\cong (T_{\Theta_2} \text{ on } \mathcal{K}_{\Theta_2}))
\]
if and only if there exist unitary operators $U: \cD_2\to\cD_1$ and $V:(\cD_2)_\ast\to(\cD_1)_\ast$ satisfying 
\[
\Theta_1(z) U = V \Theta_2(z)\qquad \text{for }z\in\D.
\]

Let us outline the relationship of rank one perturbations with model theory.  If $n=1$, we describe part of Aleksandrov--Clark theory, and also refer the reader to Subsection \ref{sss-ACTheory} below.
Let $U_\OZ$ be as above.
With the notation $\cD^\perp = \cH\ominus \cD$ and $\cD_\ast^\perp= \cH\ominus \cD_\ast$ and a rank $n$ unitary operator $A:\cD\to \cD_\ast$, we can construct a unitary operator $U_A$ on $\cH$ by setting
\[
\begin{array}{rcccl}
\quad U_A:&\cD^\perp&\longrightarrow&\cD_\ast^\perp&
\quad\qquad\text{with }U_A f=U_\OZ f\text{ for } f\in\cD^\perp\text{ and} \\
&\oplus&&\oplus\\
 U_A:&\cD&\longrightarrow&\cD_\ast &
\quad\qquad\text{by }U_A f=A f\text{ for } f\in\cD.
\end{array}
\]

Let us introduce rank $n$ unitary perturbations of $U_\OZ$ on $\cH$.
Define the operators $P$ and $P_\ast$ to be the orthogonal projections of $\cH$ onto $\cD$ and $\cD_\ast$,
respectively. Further, we use the notation $P^\perp = I-P$ and $P^\perp_\ast = I-P_\ast$.
Consider the family of perturbations of $U_\OZ$ given by
\begin{align}\label{d-UA}
 U_A = P^\perp_\ast U_\OZ P^\perp \oplus A P
 \qquad A: \cD\to \cD_\ast\text{ with }\|A\|\le 1.
\end{align}
It is not hard to see that $\cD_\ast$ is a cyclic subspace for $U_A$ for each contraction $\|A\|\le 1$; that is,
\[
\cH = \clos\spa \{U_A^{(l)} \cD_\ast: l\in\Z\}
\text{ where }
U_A^{(l)} 
= 
\left\{
\begin{array}{ll}
U_A^l &\text{for }l\ge 0,\\
(U_A^\ast)^{-l}&\text{for }l < 0.
\end{array}
\right.
\]
Of course, if $A$ is unitary, then $U_A^{(l)} = U_A^l$ for $l\in\Z$.

If $A$ is a strict contraction; that is, $\|A\|<1$, then the operator $U_A$ is a cnu contraction with the same deficiency indices and the same defect spaces as $U_\OZ$. If $A$ is the zero operator $\OZ: \cD\to \cD_\ast$, then we recover the original cnu contraction $U_\OZ$.

In the remainder of this subsection, we assume that $A$ is unitary; that is, $AA^\ast = \OID$.
Then the operator $U_A$ is a rank $n$ unitary perturbation of $U_\OZ$. In this case, we can find a nonnegative measure $\mu_A$ on the unit circle $\T$ which is the scalar-valued spectral measure for the operator $U_A$; that is, we have the direct  integral
\begin{align}\label{d-directsum}
\cH = \oplus \int_\T \cH(\xi) d\mu_A(\xi)
\qquad\text{with }
U_A=\oplus \int_\T \xi\, \OID_{\cH(\xi)} d\mu_A(\xi),
\end{align}
where $ \OID_{\cH(\xi)} $ denotes the identity operator on the Hilbert space $ \cH(\xi) $.
Since $\cD_\ast$ is a cyclic subspace for $U_A$ of minimal dimension, we have $\max \dim\cH(\xi) = n$.

By Theorem \ref{t-tte} we can re-write this perturbation as a problem on the model space
\begin{align}\label{e-KTe}
\mathcal{K}_\Theta = 
\begin{pmatrix}H^2(\cD_\ast)\vspace{.4mm}\\\bar\bigtriangleup L^2(\cD)\end{pmatrix}
\ominus \begin{pmatrix}\Theta\\\bigtriangleup\end{pmatrix} H^2(\cD),
\end{align}
where $\Theta$ is the characteristic operator function that corresponds to $U_\OZ$.

\begin{theo}\label{NTFAE}
The following three statements are equivalent:
\begin{itemize}
\item[1)] The operators $(U_\OZ^\ast)^n$ converge to the zero operator in the strong operator topology as $n\to \infty$.
\item[2)] The model space reduces to $\mathcal{K}_\Theta=H^2(\cD)\ominus\Theta H^2(\cD)$; that is, the second component collapses to $\{0\}$.
\item[3)] The characteristic operator function $\Theta$ is inner.
\end{itemize}
\end{theo}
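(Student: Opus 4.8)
The plan is to pass to the functional model via Theorem \ref{t-tte}, so that $U_\OZ\cong T_\Theta$ and statement (1) becomes the assertion $T_\Theta\in C_{\cdot 0}$. The equivalence (2)$\Leftrightarrow$(3) is then immediate from \eqref{d-Kteb}: the second component $\bar\bigtriangleup L^2(\cD)$ is trivial exactly when $\bigtriangleup(\xi)=\OZ$ a.e., i.e. $\Theta^\ast(\xi)\Theta(\xi)=\OID_\cD$ a.e., and since $\dim\cD=\dim\cD_\ast=n<\infty$ every isometry is unitary, so this occurs precisely when $\Theta$ is inner. The substance of the theorem is therefore the equivalence (1)$\Leftrightarrow$(2), which I would obtain by analyzing the strong asymptotics of $(T_\Theta^\ast)^m$ directly on the model space.

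First I would record two structural facts. Writing $W=\begin{pmatrix}T_z&\OZ\\\OZ&M_\xi^{\bar\bigtriangleup}\end{pmatrix}$ on $\begin{pmatrix}H^2(\cD_\ast)\\\bar\bigtriangleup L^2(\cD)\end{pmatrix}$, the map $w\mapsto\begin{pmatrix}\Theta\\\bigtriangleup\end{pmatrix}w$ is an isometry on $H^2(\cD)$, since $\Theta^\ast\Theta+\bigtriangleup^2=\OID_\cD$ a.e.; hence $\mathcal M:=\begin{pmatrix}\Theta\\\bigtriangleup\end{pmatrix}H^2(\cD)$ is closed, and because multiplication by $z$ corresponds to multiplication by $\xi$ on the boundary it is invariant under $W$. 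Consequently $\mathcal K_\Theta=\mathcal M^\perp$ is invariant under $W^\ast$, so that $T_\Theta^\ast=W^\ast|_{\mathcal K_\Theta}$ with no compression needed. As $M_\xi^{\bar\bigtriangleup}$ is unitary while $(T_z^\ast)^m\to\OZ$ strongly (the backward shift), for $f=\begin{pmatrix}f_1\\f_2\end{pmatrix}\in\mathcal K_\Theta$ one gets $(T_\Theta^\ast)^m f=\begin{pmatrix}(T_z^\ast)^m f_1\\\bar\xi^{\,m}f_2\end{pmatrix}$, whence $\|(T_\Theta^\ast)^m f\|\to\|f_2\|$. Thus $T_\Theta\in C_{\cdot 0}$ if and only if every element of $\mathcal K_\Theta$ has vanishing second component.

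It remains to upgrade ``every element of $\mathcal K_\Theta$ has second component $0$'' to ``$\bar\bigtriangleup L^2(\cD)=\{0\}$'', and I expect this to be the crux. The hypothesis gives $\mathcal K_\Theta\subseteq\begin{pmatrix}H^2(\cD_\ast)\\0\end{pmatrix}$, hence $\begin{pmatrix}0\\\bar\bigtriangleup L^2(\cD)\end{pmatrix}\subseteq\mathcal M$. Writing $\begin{pmatrix}0\\g\end{pmatrix}=\begin{pmatrix}\Theta w\\\bigtriangleup w\end{pmatrix}$ and combining $\Theta w=0$ with $\Theta^\ast\Theta+\bigtriangleup^2=\OID_\cD$ forces $\bigtriangleup w=w$ pointwise, so $g=w\in H^2(\cD)$; thus the whole space $\bar\bigtriangleup L^2(\cD)$ would lie inside $H^2(\cD)$. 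But $\bar\bigtriangleup L^2(\cD)$ reduces multiplication by $\xi$ on $L^2(\cD)$, and a nonzero subspace invariant under multiplication by $\bar\xi$ cannot sit inside $H^2(\cD)$: if $g$ and all $\bar\xi^{\,k}g$ ($k\ge 0$) were analytic, the Fourier support of $g$ would be empty. Hence $\bar\bigtriangleup L^2(\cD)=\{0\}$, which is (2), closing (1)$\Rightarrow$(2); the reverse implication is the trivial direction of the asymptotic computation. The only genuinely delicate point is this reducing-subspace obstruction, which is exactly what prevents the surviving unitary part of the model from being absorbed into the analytic component; the rest is bookkeeping with the two-component model together with the elementary contrast between the backward shift and a unitary.
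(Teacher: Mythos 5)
Your proof is correct, and it takes a genuinely different route from the paper's, because the paper does not actually prove this theorem: in the remark following the statement, the equivalence 1) $\Leftrightarrow$ 3) is outsourced to \cite{BallLubin} and to Proposition 3.5 of \cite{SzNF2010}, while 2) $\Leftrightarrow$ 3) is dismissed as immediate from the definition \eqref{e-KTe}. You instead prove 1) $\Leftrightarrow$ 2) directly on the model space and recover 3) by the same definitional observation. Your direct argument --- the subspace $\begin{pmatrix}\Theta\\\bigtriangleup\end{pmatrix}H^2(\cD)$ is closed (by the isometry identity $\Theta^\ast\Theta+\bigtriangleup^2=\OID_\cD$) and invariant under the ambient operator $W$, hence $T_\Theta^\ast$ is a genuine restriction of $W^\ast$ and $\|(T_\Theta^\ast)^m f\|\to\|f_2\|$ because the first coordinate is a backward shift and the second a unitary; then the Wiener-type Fourier argument (a nonzero subspace of $L^2(\cD)$ invariant under $M_{\bar\xi}$ cannot sit inside $H^2(\cD)$) upgrades ``every element of $\mathcal{K}_\Theta$ has vanishing second component'' to $\bar\bigtriangleup L^2(\cD)=\{0\}$ --- is in substance a self-contained reconstruction of the classical Sz.-Nagy--Foia\c s proof behind the citation, so nothing is lost; the one small jump, that $\bigtriangleup^2 w=w$ with $\OZ\le\bigtriangleup\le\OID_\cD$ forces $\bigtriangleup w=w$, is a standard consequence of the spectral theorem for positive contractions. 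What your version buys over the paper's citation: it makes explicit where $\dim\cD=\dim\cD_\ast=n<\infty$ enters (so that $\Theta(\xi)$ isometric a.e.\ is the same as unitary a.e., i.e., inner in the paper's sense), and it exhibits the geometric mechanism --- the residual unitary summand of the model is precisely the obstruction to membership in $C_{\cdot 0}$ --- which is the same mechanism that drives the paper's main Theorem \ref{t-main}.
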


\begin{rems}
a) The equivalency of statements 1) and 3) can be found in \cite{BallLubin}, see also Proposition 3.5 of \cite{SzNF2010}. The fact that 2) $\Leftrightarrow$ 3) is trivial by virtue of the definition \eqref{e-KTe} of $\mathcal{K}_\Theta$.\\
b) The equivalency of part 3) to the fact that ``the scalar-valued spectral measure of the unitary perturbation is purely singular'' is the main result of the work at hand. It is proven in Section \ref{s-main} below for general $n\in \N$. In the case $n=1$, the equivalence of the two statements is implied by a well-known result in Aleksandrov--Clark theory, see Theorem \ref{t-delta} below.
\end{rems}

\subsection{Dilation theory}\label{ss-Dil}
Consider a contraction $C$ on a separable Hilbert space $\cH$ with deficiency indices $(n,n)$. 
Recall that a \emph{(power) unitary dilation} of the operator $C$ on the Hilbert space $\cH$ is defined to be a unitary operator $U$ on a larger Hilbert space $\mathcal K$ such that for all $m\in\N$ one has
\[
P_\cH U^m|_\cH = C^m.
\]
A unitary dilation $U$ of $C$ is said to be \emph{minimal} if we have $\mathcal K = \clos\spa\{U^k \mathcal \cH:k\in \Z\}$.

\begin{theo}(see for example, \cite{SzNF2010}, page 154)
 For every contraction on a Hilbert space, there exists a unique minimal unitary dilation.
\end{theo}

The minimal unitary dilation of a fixed contraction is determined uniquely up to isomorphism.
Hence, without loss of generality, we will refer to it as \emph{the} minimal unitary dilation of $C$.

It is well-known (see for example, \cite{SzNF2010}) that the dilation space $\mathcal K$ decomposes into
$\mathcal K = \HI \oplus \cH \oplus \Hw,$ where
\begin{align*}
\Hw  = \sum_{n\ge 0}\oplus U^n \mathcal{X},
\,\,\,
\HI = \sum_{n< 0}\oplus  U^n \mathcal{X_\ast}
\qquad\text{and} \qquad
\mathcal{X} = \Hw\ominus U \Hw,
\,\,\,
\mathcal{X}_\ast = \HI\ominus U^\ast \HI.
\end{align*}
If $U$ is the minimal unitary dilation and $C$ has deficiency indices $(n,n)$, then we have $\dim(\mathcal X)=\dim(\mathcal X_\ast)=n$.

It was proven by Schreiber that the minimal unitary dilation
of a strict contraction (that is, $\|C\|<1$) has an absolutely continuous spectrum.
This hypothesis was later weakened by Sz.-Nagy--Foia\c s.

\begin{theo}\label{t-Schreiber}(see for example, \cite{SzNF2010}, page 154)
 If $C$ is a cnu contraction, then its minimal unitary dilation has an absolutely continuous spectrum.
\end{theo}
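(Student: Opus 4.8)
The plan is to prove the statement directly at the level of spectral measures: I will show that the minimal unitary dilation $U$ on $\mathcal K$ has no singular part, by arguing that any singular part would yield a nonzero subspace reducing $C$ to a unitary operator, contradicting the hypothesis that $C$ is cnu.

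First I would check that the wandering subspaces $\mathcal X$ and $\mathcal X_\ast$ consist of absolutely continuous vectors for $U$. For $x\in\mathcal X$, the relation $\mathcal X=\Hw\ominus U\Hw$ gives $U^kx\in U\Hw\perp\mathcal X$ for every $k\ge 1$, so that $\langle U^kx,x\rangle=\|x\|^2\delta_{k,0}$ for all $k\in\Z$ (using unitarity of $U$ to pass to negative $k$). These numbers are exactly the Fourier coefficients of the scalar spectral measure $\mu_x$ of $x$, hence $\mu_x=\|x\|^2 m$, which is absolutely continuous; the symmetric computation with $U^\ast$ handles $\mathcal X_\ast$. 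Since the absolutely continuous vectors form a closed subspace $\mathcal K_{\mathrm{ac}}$ reducing $U$, and since $\Hw=\clos\spa\{U^n\mathcal X:n\ge 0\}$ and $\HI=\clos\spa\{U^n\mathcal X_\ast:n<0\}$, I obtain $\Hw\oplus\HI\subseteq\mathcal K_{\mathrm{ac}}$.

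Next, let $\mathcal K_{\mathrm{s}}=\mathcal K\ominus\mathcal K_{\mathrm{ac}}$ be the singular subspace, which reduces $U$. By the previous step $\mathcal K_{\mathrm{s}}\perp(\Hw\oplus\HI)$, and feeding this into the decomposition $\mathcal K=\HI\oplus\cH\oplus\Hw$ localizes the singular part inside the original space: $\mathcal K_{\mathrm{s}}\subseteq\cH$. Since $\mathcal K_{\mathrm{s}}$ reduces $U$, for $x\in\mathcal K_{\mathrm{s}}\subseteq\cH$ we have $Ux,U^\ast x\in\mathcal K_{\mathrm{s}}\subseteq\cH$, so the dilation identity yields $Cx=P_\cH Ux=Ux$ and $C^\ast x=P_\cH U^\ast x=U^\ast x$. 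Thus $\mathcal K_{\mathrm{s}}$ reduces $C$ and $C|_{\mathcal K_{\mathrm{s}}}=U|_{\mathcal K_{\mathrm{s}}}$ is unitary; as $C$ is cnu this forces $\mathcal K_{\mathrm{s}}=\{0\}$, i.e. $U$ is absolutely continuous.

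I expect the crux to be the localization $\mathcal K_{\mathrm{s}}\subseteq\cH$ together with the ensuing identification of $C$ with $U$ on $\mathcal K_{\mathrm{s}}$: this is the only place where the shift structure of $\Hw$ and $\HI$ (and hence their absolute continuity) is converted into a statement about $C$ itself, and it is precisely what allows the cnu hypothesis to be applied. The computation that wandering vectors have Lebesgue spectral measure is routine, as is the standard fact that the absolutely continuous vectors form a reducing subspace.
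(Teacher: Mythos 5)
Your argument is correct, and in fact there is nothing in the paper to compare it against: Theorem \ref{t-Schreiber} is stated there as a known classical result, with a pointer to \cite{SzNF2010}, and no proof is given. What you wrote is essentially a reconstruction of the Sz.-Nagy--Foia\c s argument behind that citation. All the steps check out: a wandering vector $x\in\cX$ satisfies $\langle U^kx,x\rangle=\|x\|^2\delta_{k,0}$ for all $k\in\Z$, so its spectral measure is $\|x\|^2 m$ by uniqueness of Fourier coefficients of measures; the absolutely continuous vectors form a closed reducing subspace, hence $\Hw\oplus\HI\subseteq\mathcal K_{\mathrm{ac}}$, and the singular subspace $\mathcal K_{\mathrm{s}}$ is trapped in $\cH=\mathcal K\ominus(\HI\oplus\Hw)$; and since $\mathcal K_{\mathrm{s}}$ reduces $U$, the dilation identity forces $Cx=Ux$ and $C^\ast x=U^\ast x$ there, so $\mathcal K_{\mathrm{s}}$ reduces $C$ to a unitary, which the cnu hypothesis rules out unless $\mathcal K_{\mathrm{s}}=\{0\}$. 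Two points worth making explicit in a write-up: (i) the identity $C^\ast x=P_\cH U^\ast x$ uses the adjoint dilation relation $(C^\ast)^m=P_\cH (U^\ast)^m|_\cH$, obtained by taking adjoints in $\langle C^m h,g\rangle=\langle U^m h,g\rangle$ for $h,g\in\cH$; (ii) the three-term decomposition $\mathcal K=\HI\oplus\cH\oplus\Hw$ is a property of the \emph{minimal} dilation, so this is precisely where minimality enters your proof. The textbook proof differs only cosmetically: it first shows that for cnu $C$ the dilation space is spanned by the two bilateral-shift reducing subspaces generated by the wandering subspaces (each absolutely continuous), and it disposes of the orthogonal complement of that span by exactly your localization-in-$\cH$-plus-cnu step; you instead dispose of the singular subspace directly. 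The mechanism is the same either way.
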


A formula for the spectral multiplicity of the minimal unitary dilation is known. Let $\Theta$ be the operator-valued characteristic function of the cnu contraction $C$. For the defect space $\cD\ci{C}$of $C$ and $\xi\in\T$, consider the space
\[
  \cX(\xi) = \{f \in \cD\ci{C} : \bigtriangleup(\xi) f \neq 0\}.
\]

\begin{theo}\label{t-spectralmult}(see for example, \cite{SzNF2010}, page 251)
 The spectral multiplicity of the minimal unitary dilation of the cnu contraction $C$ with defect indices
$\partial\ci C = \partial\ci{C^\ast} = n$ is equal to
\[
 n+\dim \cX(\xi),
\qquad \text{for }\xi\in\T\text{ a.e.~}m.
\]
\end{theo}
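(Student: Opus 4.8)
The plan is to realize the minimal unitary dilation $U$ of $C\cong T_\Theta$ explicitly as multiplication by the independent variable $\xi$ on a direct integral over $\T$, and then to read off the spectral multiplicity as the a.e.\ fiber dimension. Throughout write $\cD=\cD\ci C$ for the defect space of $C$ and $\cD_\ast=\cD\ci{C^\ast}$ for that of $C^\ast$, so that $\Theta\in H^\infty(\cD\to\cD_\ast)$, $\bigtriangleup=(\OID_{\cD}-\Theta^\ast\Theta)^{1/2}$, and $\dim\cD=\dim\cD_\ast=n$.

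First I would invoke the Sz.-Nagy--Foia\c s functional model for the dilation: the minimal unitary dilation of $T_\Theta$ is unitarily equivalent to multiplication by $\xi$ on
\[
\mathcal K = L^2(\cD_\ast)\oplus \clos\bigl(\bigtriangleup L^2(\cD)\bigr),
\]
with the model space $\mathcal K_\Theta$ of \eqref{e-KTe} sitting inside as the compression subspace. Concretely, one checks that this multiplication operator $U$ is unitary, that $P_{\mathcal K_\Theta}U^m|_{\mathcal K_\Theta}=T_\Theta^m$ for every $m\ge 0$, and that $\mathcal K=\clos\spa\{U^k\mathcal K_\Theta:k\in\Z\}$; the last, minimality, property pins it down as \emph{the} minimal unitary dilation. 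By Theorem \ref{t-Schreiber} this $U$ is purely absolutely continuous, so its scalar spectral measure is equivalent to $m$ and the displayed decomposition is precisely its diagonalization.

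Next I would use the standard fact that an absolutely continuous unitary operator presented as multiplication by $\xi$ on a direct integral $\int_\T^\oplus \mathcal K(\xi)\,dm(\xi)$ has spectral multiplicity equal to $\dim\mathcal K(\xi)$ for a.e.\ $\xi$. The two summands above are already direct integrals: $L^2(\cD_\ast)$ has the constant fiber $\cD_\ast$ of dimension $n$, while $\clos(\bigtriangleup L^2(\cD))$ has fiber $\clos(\bigtriangleup(\xi)\cD)=(\Ker\bigtriangleup(\xi))^\perp$, of dimension $\dim\Ran\bigtriangleup(\xi)$. Since $(\Ker\bigtriangleup(\xi))^\perp$ is exactly the closed span of $\{f\in\cD:\bigtriangleup(\xi)f\neq0\}=\cX(\xi)$, adding the two fiber dimensions gives $n+\dim\cX(\xi)$ for a.e.\ $\xi\in\T$, as claimed.

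The hard part is the first step: verifying that the stated multiplication operator really is the minimal unitary dilation of $T_\Theta$, i.e.\ checking the dilation identity together with minimality, since this is where the detailed structure of the characteristic function $\Theta$ and of $\mathcal K_\Theta$ is used, rather than the essentially bookkeeping multiplicity count of the last step. A secondary technical point will be measurability -- that $\xi\mapsto\dim\Ran\bigtriangleup(\xi)$ is measurable and that the fiber of $\clos(\bigtriangleup L^2(\cD))$ coincides a.e.\ with $\clos(\bigtriangleup(\xi)\cD)$ -- but both follow from $\bigtriangleup$ being a measurable operator-valued function on $\T$.
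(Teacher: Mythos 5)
The paper itself offers no proof of Theorem \ref{t-spectralmult}: it is quoted verbatim from \cite{SzNF2010}, page 251. So there is no internal argument to compare against; what you have written is, in outline, exactly the standard functional-model proof from that reference. Your route is sound: by Theorem \ref{t-tte} one may take $C=T_\Theta$; the minimal unitary dilation of $T_\Theta$ is multiplication by $\xi$ on $L^2(\cD_\ast)\oplus\clos\bigl(\bigtriangleup L^2(\cD)\bigr)$ (the dilation identity and minimality are precisely the content of the Sz.-Nagy--Foia\c s model theorem, which it is fair to cite rather than reprove, since that is the level of citation the paper uses for the whole statement); this operator is already diagonalized as multiplication on a direct integral over $(\T,m)$, so its multiplicity function is the a.e.\ fiber dimension; the fiber of $L^2(\cD_\ast)$ contributes $n$ and the fiber of the residual summand contributes the rest. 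Note also that your residual space $\clos\bigl(\bigtriangleup L^2(\cD)\bigr)$ is the correct one from \cite{SzNF2010}, and the identification of its fibers is easy to justify: $g\perp \bigtriangleup L^2(\cD)$ if and only if $\bigtriangleup(\xi)g(\xi)=0$ a.e., so $\clos\bigl(\bigtriangleup L^2(\cD)\bigr)=\{g\in L^2(\cD): g(\xi)\in\Ran\bigtriangleup(\xi)\text{ a.e.}\}$.

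One step is wrong as stated, though the conclusion survives. You claim that $(\Ker\bigtriangleup(\xi))^\perp$ is the closed span of $\cX(\xi)=\{f\in\cD:\bigtriangleup(\xi)f\neq 0\}$. It is not: if $f\in\cX(\xi)$ and $k\in\Ker\bigtriangleup(\xi)$, then $f$ and $f+k$ both lie in $\cX(\xi)$, so $k$ lies in the span; hence $\spa\cX(\xi)=\cD$ whenever $\cX(\xi)\neq\varnothing$. The correct statement is simply that the fiber is $\Ran\bigtriangleup(\xi)=(\Ker\bigtriangleup(\xi))^\perp$ (self-adjointness of $\bigtriangleup(\xi)$ in finite dimension), whose dimension is $\dim\cD-\dim\Ker\bigtriangleup(\xi)$; and since $\cX(\xi)$ is not literally a linear subspace, this number is the only sensible reading of $\dim\cX(\xi)$ in the theorem (it is the maximal dimension of a subspace meeting $\Ker\bigtriangleup(\xi)$ trivially). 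With that repair your count $n+\dim\cX(\xi)$ is exactly right. A second, cosmetic point: Theorem \ref{t-Schreiber} gives that the dilation's spectral measure is absolutely continuous with respect to $m$, not equivalent to $m$; equivalence does hold here, but only because the $L^2(\cD_\ast)$ summand forces multiplicity at least $n\geq 1$ a.e. -- and in fact, once the dilation is exhibited as multiplication on a direct integral over $m$, absolute continuity is automatic and the appeal to Theorem \ref{t-Schreiber} is not needed at all.
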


\subsection{Aleksandrov--Clark Theory}\label{sss-ACTheory}
Let us generalize Clark's setup to the case of non-inner characteristic functions.
This Aleksandrov--Clark setup, which we now describe, is unitarily equivalent to that in Subsection \ref{ss-GenMod} for $n=1$.
Take $\te\in H^\infty(\D)$ with $\|\te\|\ci{H^\infty}\le 1$ and $\te(0)=0$.

Recall the definition of the Sz.-Nagy-Foia\c s model space $\mathcal{K}_\te$ and of the cnu contraction $T_\te$.
All rank one unitary perturbations of $T_\te$ are given by
	\[
	\widetilde U_\gamma=T_\te + \gamma \left(\fdot, \begin{pmatrix}\bar z\te\\\bar z\bigtriangleup\end{pmatrix}\right)_{\mathcal{K}_\te}\begin{pmatrix}\ID_z\\0\end{pmatrix}
	\qquad\text{on }\mathcal{K}_\te\text{ for }|\gamma|=1,
	\]
where $ \left(\fdot, \begin{pmatrix}\bar z\te\\\bar z\bigtriangleup\end{pmatrix}\right)_{\mathcal{K}_\te}\begin{pmatrix}\ID_z\\0\end{pmatrix}$ denotes the rank one operator defined by
\[
\begin{pmatrix}f\\g\end{pmatrix} \mapsto \begin{pmatrix}\left[(f,\bar z \te)\ci{H^2} + (g,\bar z\bigtriangleup)\ci{L^2}\right]\ID_z\\0\end{pmatrix}.
\]
Note that $\Theta(0)=0$ implies that $\bar z\in H^2$. Moreover, there is an explicit identification of the defect spaces of $T_\te$ with the spaces $ \left<\begin{pmatrix}\bar z\te\\\bar z\bigtriangleup\end{pmatrix}\right>$ and $\left<\begin{pmatrix}\ID_z\\0\end{pmatrix}\right>$.

Let $\mu_\gamma$ denote the spectral measure of $\widetilde U_\gamma$ with respect to the cyclic vector $\kf{\ID_z}{0}$; that is,
\[
\left((\widetilde U_\gamma+z I)(\widetilde U_\gamma-z I)^{-1}\kf{\ID_z}{0}, \kf{\ID_z}{0}\right)\ci{\mathcal{K}_\te}= \int\ci\T \frac{\xi+z}{\xi-z} \,d\mu_\gamma(\xi)
\qquad\text{for }
z\in \C\backslash\T.
\]

In order to explain the equivalence of this setup to that discussed in Subsection \ref{ss-GenMod}, for the case $n=1$, let us recall that the Clark operator $\Phi_\gamma: \mathcal K_\te\to L^2(\mu_\gamma)$ for $\gamma\in \T$ intertwines $T_\theta$ and $U_\OZ$ and it maps the defect spaces $\cD\ci{T_\theta}, \cD\ci{T_\theta^\ast}\subset \mathcal K_\te$ of $T_\theta$ to those of $U_\OZ$; that is,
\[
\Phi_\gamma T_\theta = U_\OZ \Phi_\gamma
\qquad\text{and}\qquad
\Phi_\gamma \begin{pmatrix}\te\\ \bigtriangleup\end{pmatrix} = \bar\xi
\qquad\text{and}\qquad
\Phi_\gamma \begin{pmatrix}\ID_z\\0\end{pmatrix} = \ID_\xi.
\]

There is a one-to-one correspondence between the spectral families that arise from rank one unitary perturbations and the corresponding $\te\in H^\infty (\D)$ with $\|\te\|\ci{H^\infty} \le 1$. For $z\in \C\backslash\T$ we have by a Herglotz argument that
\[
 \frac{\gamma + \te(z)}{\gamma - \te(z)} = \int\ci\T \frac{\xi+z}{\xi-z}\,d\mu_\gamma(\xi),
\]
see for example, \cite{cimaross}.

For a complex-valued measure $\tau$, the \emph{Cauchy transform} $K$ of the measure $f\tau$ is given by
\begin{align}\label{d-Cauchy}
K(f\tau)(z)=\int\ci\T\frac{f(\xi)d\tau(\xi)}{1-\bar\xi z},\qquad
z\in\C\backslash \T, f\in L^1(|\tau|).
\end{align}

Combining the latter two equations, one can show that
\begin{align}\label{e-herglotz}
 K\mu_\gamma(z) = 
\frac{1}{1-\bar\gamma \te(z)}\,,\qquad
z\in \C\backslash \T.
\end{align}

The \emph{normalized Cauchy transform}
\begin{equation}\label{d-noCT}
\calC_{f\mu_\gamma}(z)= \frac{K(f\mu_\gamma)}{K\mu_\gamma}(z)\,,\qquad z\in\C\backslash \T, f\in L^1(\mu_\gamma),
\end{equation}
is an analytic function on $\C\backslash \T$. It is one of the central objects in the theory of rank one perturbations.

\begin{rem}
If $\te$ is inner, we have $\Phi_\gamma^\ast f= \calC_{f \mu_\gamma}$ for all $f\in L^2(\mu_\gamma)$. For inner $\theta$, the latter formula plus the next theorem by Poltoratski, is the key to many results in the subject.
\end{rem}

\begin{theo}\label{t-polt}(Poltoratski \cite{NONTAN}, also see \cite{NPPOL}) For a complex Borel measure $\tau$ on $\T$ and any $f\in L^1(|\tau|)$, the non-tangential limit of $\calC_{f\tau}$ exists with respect to the singular part a.e.~$|\tau|\ti{s}$ and
\[
\lim\limits_{\D\ni z\to \xi } \calC_{f\tau} (z) = f(\xi ) \qquad \text{for }\xi \in \T\text{ a.e.~}|\tau|\ti{s}.
\]
\end{theo}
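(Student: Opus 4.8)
The plan is to strip away the complex structure of $\tau$ and reduce to a single statement about a positive measure, and then to localize at a typical point of the singular part. First I would write the polar decomposition $d\tau = w\,d|\tau|$ with $|w|=1$ a.e.\ $|\tau|$ and set $\mu:=|\tau|$, so that $\mu\ti{s}=|\tau|\ti{s}$. Because $K(f\tau)=K(fw\mu)$ and $K\tau=K(w\mu)$, the normalized Cauchy transform factors as
\[
\calC_{f\tau}=\frac{K(fw\mu)/K\mu}{K(w\mu)/K\mu},
\]
so it suffices to prove the following master statement for a finite positive measure $\mu$ and $g\in L^1(\mu)$:
\[
\lim_{\D\ni z\to\xi}\frac{K(g\mu)(z)}{K\mu(z)}=g(\xi)\qquad\text{a.e. }\mu\ti{s}.
\]
Applying this with $g=fw$ and with $g=w$ and dividing then yields $\calC_{f\tau}\to fw/w=f$ a.e.\ $\mu\ti{s}$, the division being legitimate since $|w|=1$.

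To prove the master statement I would localize at a point $\xi_0$ that is simultaneously a Lebesgue point of $g$ with respect to $\mu$ and a point of infinite symmetric derivative of $\mu$; both properties hold $\mu\ti{s}$-a.e. The identity $\re\frac{1}{1-\bar\xi z}=\tfrac12\bigl(1+P_z(\xi)\bigr)$, with $P_z$ the Poisson kernel, gives $\re K\mu(z)=\tfrac12\bigl(\mu(\T)+P[\mu](z)\bigr)$, where $P[\mu]=\int P_z\,d\mu$; since the Poisson integral of a singular measure tends to $+\infty$ nontangentially at points of infinite derivative, $|K\mu(z)|\to\infty$ at $\xi_0$. Subtracting $c:=g(\xi_0)$ and setting $h:=g-c$, the goal reduces to showing $K(h\mu)(z)/K\mu(z)\to 0$, and I would split $h=h_{\mathrm{near}}+h_{\mathrm{far}}$ according to a short arc $I_\delta$ centered at $\xi_0$ and its complement.

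The far piece is routine: for $\xi\notin I_\delta$ and $z$ close to $\xi_0$ one has $|1-\bar\xi z|\ge\delta/2$, so $|K(h_{\mathrm{far}}\mu)(z)|\le(2/\delta)\|h\|_{L^1(\mu)}$ remains bounded while the denominator blows up, forcing this contribution to $0$. Thus everything hinges on $h_{\mathrm{near}}$, where the Cauchy kernel is singular.

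The near piece is where I expect the real difficulty. The guiding heuristic (cleanest after splitting $h$ into its real and imaginary parts) is that the normalized weights $\re\frac{1}{1-\bar\xi z}\,d\mu(\xi)\big/\re K\mu(z)$ are probability measures that concentrate at $\xi_0$ as $z\to\xi_0$, so a genuine averaging argument at the Lebesgue point should drive the real part of the contribution to $0$. The main obstacle is the imaginary, conjugate-function part of $K(h_{\mathrm{near}}\mu)$: cancellation in the kernel defeats any naive pointwise bound, and one must instead establish a weak-type (distributional) estimate for the relevant maximal operator and combine it with a Borel–Cantelli or good-$\lambda$ argument to convert the local $\mu$-smallness of $h$ at the Lebesgue point into the a.e.\ pointwise conclusion. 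This analytic core is precisely what the arguments of \cite{NONTAN} and \cite{NPPOL} supply, and it is the step I would expect to occupy the bulk of the work, the earlier reductions being essentially bookkeeping.
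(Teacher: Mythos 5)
First, a point of comparison: the paper does not prove Theorem \ref{t-polt} at all. It is quoted from \cite{NONTAN} (see also \cite{NPPOL}) and used as a black box, so the only meaningful review is of your proposal's internal completeness. Judged on those terms, your argument is a correct \emph{reduction}, not a proof. The polar decomposition $d\tau=w\,d|\tau|$, the passage to the master statement for positive $\mu$, the localization at points that are simultaneously $\mu$-Lebesgue points of $g$ and points of infinite symmetric derivative of $\mu$, the identity $\re K\mu(z)=\tfrac12\bigl(\mu(\T)+P[\mu](z)\bigr)$ forcing $|K\mu(z)|\to\infty$ nontangentially a.e.~$\mu\ti{s}$, and the far-field bound are all sound, but they are the routine outer shell. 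What remains after these reductions --- that $K(h_{\mathrm{near}}\mu)(z)/K\mu(z)\to 0$ nontangentially a.e.~$\mu\ti{s}$ when $h$ vanishes at the Lebesgue point --- is not a supporting lemma; it essentially \emph{is} Poltoratski's theorem. Saying that one ``must establish a weak-type estimate for the relevant maximal operator'' and that this ``is precisely what the arguments of \cite{NONTAN} and \cite{NPPOL} supply'' defers the load-bearing step to the very sources the paper cites for the whole theorem, so as a proof attempt the proposal is circular: the key idea is named but never carried out.

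Two further cautions about the parts you label routine or heuristic. (i) Even the real-part averaging step is not automatic for general measures: the natural attempt to dominate the $\mu$-Poisson average $\int|h|P_z\,d\mu\,\big/\int P_z\,d\mu$ by the $\mu$-maximal function $\sup_r \mu(I_r)^{-1}\int_{I_r}|h|\,d\mu$ fails for non-doubling $\mu$ (a dyadic-annulus comparison produces $\sum_k 2^{-k}\mu(I_{2^k\e})$ in the numerator against $\max_k 2^{-2k}\mu(I_{2^k\e})$ in the denominator, and this ratio can be unbounded), so a genuine argument is needed there too. (ii) The conjugate-kernel difficulty is not merely technical: the conclusion genuinely fails off the singular part --- on the absolutely continuous part the inner and outer nontangential limits of the normalized Cauchy transform differ, which is exactly the jump phenomenon quantified in Section \ref{s-jump} of the paper --- so any correct treatment of the near piece must exploit the singularity of $\mu\ti{s}$ in an essential way, for instance through weak-type bounds for Cauchy transforms of arbitrary measures as in \cite{NONTAN}, or through the rank-one-perturbation and spectral-averaging route of \cite{NPPOL}. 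Your outline is a sensible scaffold around either of those arguments, but it does not contain one.
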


Consider the Lebesgue decomposition $d\mu_\gamma = d(\mu_\gamma)\ti{ac} + d(\mu_\gamma)\ti{s}$, where $d(\mu_\gamma)\ti{ac} = h_\gamma dm$, $h_\gamma \in L^1(m)$.
Using analytic methods, one can prove the following formula.

\begin{theo}\label{t-delta}(see for example, \cite{cimaross}, Proposition 9.1.14)
The density of the absolutely continuous part of the spectral measure $\mu_\gamma$ is given by
\[
 h_\gamma(\xi) = \frac{1-|\theta(\xi)|^2}{|\gamma - \theta(\xi)|^2}\qquad \text{for }\xi\in\T\text{ a.e.~}m.
\]

In particular, the operators $\widetilde U_\gamma$, $|\gamma|=1$, all have purely singular spectrum if and only if $\te$ is inner.
\end{theo}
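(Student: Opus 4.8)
The plan is to read off the absolutely continuous density directly from the Herglotz representation recorded above,
\[
\frac{\gamma + \te(z)}{\gamma - \te(z)} = \int\ci\T \frac{\xi+z}{\xi-z}\,d\mu_\gamma(\xi), \qquad z \in \D.
\]
First I would note that the left-hand side is a bona fide Herglotz function on $\D$: since $\te(0)=0$ and $\|\te\|\ci{H^\infty}\le 1$, the maximum principle forces $|\te(z)|<1$ throughout $\D$, so $\gamma-\te(z)$ never vanishes there and the displayed identity makes sense with both sides analytic and of positive real part.

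Next I would take real parts. Using the identity $\re\frac{\xi+z}{\xi-z}=\frac{1-|z|^2}{|\xi-z|^2}$, the real part of the right-hand side is precisely the Poisson integral of the finite positive measure $\mu_\gamma$. By Fatou's theorem, the non-tangential boundary limit of this Poisson integral exists a.e.~$m$ and equals the Radon--Nikodym derivative $h_\gamma=d(\mu_\gamma)\ti{ac}/dm$.

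It then remains to compute the boundary values of the real part of the left-hand side. Writing $w=\te(z)$ and using $|\gamma|=1$, a one-line computation shows that the cross terms in $(\gamma+w)(\bar\gamma-\bar w)$ are purely imaginary, whence $\re\frac{\gamma+w}{\gamma-w}=\frac{1-|w|^2}{|\gamma-w|^2}$. Letting $z\to\xi$ non-tangentially and invoking the a.e.~existence of boundary values of the bounded analytic function $\te$ yields
\[
h_\gamma(\xi)=\frac{1-|\te(\xi)|^2}{|\gamma-\te(\xi)|^2}\qquad\text{for }\xi\in\T\text{ a.e.~}m,
\]
as claimed. For the final assertion, if $\te$ is inner then $|\te(\xi)|=1$ a.e.~$m$, so $h_\gamma\equiv 0$ and every $\mu_\gamma$ is purely singular; conversely, if $\te$ is not inner then $|\te|<1$ on a set of positive Lebesgue measure, and on that set $|\gamma-\te(\xi)|>0$ (since $|\gamma|=1>|\te(\xi)|$), so $h_\gamma>0$, forcing $(\mu_\gamma)\ti{ac}\neq 0$.

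The step I expect to require the most care is not conceptual but bookkeeping: one must ensure that the non-tangential limit of the Poisson integral, the non-tangential boundary value $\te(\xi)$, and the resulting value $|\te(\xi)|$ all exist simultaneously a.e.~$m$. Each of these is a standard Fatou-type statement, and since a countable intersection of sets of full measure again has full measure, the three identifications may be combined on a common set of full Lebesgue measure, completing the argument.
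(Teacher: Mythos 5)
Your argument is correct in substance, and it is exactly the route the paper itself points to: the paper does not prove Theorem \ref{t-delta} at all, but cites it (\cite{cimaross}, Proposition 9.1.14) with the remark that it follows ``using analytic methods,'' and the standard analytic proof is precisely your Herglotz-plus-Fatou computation (real part of the Herglotz identity equals the Poisson integral of $\mu_\gamma$, then Fatou's theorem identifies the non-tangential limit with $h_\gamma$). Indeed, the whole point of the paper is to give a \emph{different}, dilation-theoretic proof of the ``in particular'' statement, one that survives the passage to rank $n>1$, where no analogue of your density formula is currently available.

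The one detail you leave unaddressed is the exceptional set where $\te(\xi)=\gamma$. Your passage to non-tangential limits in
\[
\re\frac{\gamma+\te(z)}{\gamma-\te(z)}=\frac{1-|\te(z)|^2}{|\gamma-\te(z)|^2}
\]
uses continuity of $w\mapsto(1-|w|^2)/|\gamma-w|^2$ at $w=\te(\xi)$, which fails when $\te(\xi)=\gamma$; at such points the claimed formula for $h_\gamma$ is itself a $0/0$ expression. So you need $m(\{\xi\in\T : \te(\xi)=\gamma\})=0$, which is true but requires justification: either invoke Privalov's uniqueness theorem (a bounded analytic function whose non-tangential boundary values equal the constant $\gamma$ on a set of positive measure is identically $\gamma$, contradicting $\te(0)=0$), or use equation \eqref{e-herglotz} already in the paper, $K\mu_\gamma=(1-\bar\gamma\te)^{-1}$, together with the fact that the Cauchy transform of a finite measure has finite non-tangential limits a.e.~$m$, forcing $1-\bar\gamma\te(\xi)\neq 0$ a.e.~$m$. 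Once that null set is excluded, your bookkeeping paragraph handles the remaining a.e.~identifications, and both directions of the equivalence with innerness follow as you state them.
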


In \cite{AbaLiawPolt} the authors proved a certain Aronszajn--Krein type formula in the case of self-adjoint rank one perturbations which states that the normalized Cauchy transform of a certain measure (involving $\mu_\gamma$) is independent of $\gamma$. This result is easily transferred to the case of rank one unitary perturbations as follows.

We use the notation $\mu = \mu_1$. By virtue of the spectral theorem, there exists a unitary operator
\begin{align}\label{d-Vgamma}
V_\gamma:L^2(\mu)\to L^2(\mu_\gamma) \text{ such that }
V_\gamma U_\gamma =  M_\zeta V_\gamma\text{ and } V_\gamma \ID_\xi= \ID_\zeta,
\end{align}
 where the operator $M_\zeta$ acts as multiplication by the independent variable in $L^2(\mu_\gamma)$.
In \cite{mypaper} the analogue of $V_\gamma$ for self-adjoint rank one perturbations was investigated.
For a function $f\in L^2(\mu)$ consider
\[
 f_\gamma = V_\gamma f\in L^2(\mu_\gamma), \qquad |\gamma|=1.
\]

\begin{theo}\label{t-CTinv}(Aronszajn--Krein type formula, see \cite{AbaLiawPolt})
 For $f \in L^2(\mu)$ we have
 \[
 \frac{K(f\mu)}{K\mu} (z)= \frac{K(f_\gamma\mu_\gamma)}{K\mu_\gamma}(z)
 \qquad \text{for }z\in\C\backslash\T\text{ a.e.~}m.
 \]
\end{theo}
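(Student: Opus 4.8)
My plan is to transport the whole identity to the model measure space $L^2(\mu)$ and recognize it as the invariance of a resolvent ratio under a rank one unitary perturbation directed along the cyclic vector. The starting point is an operator-theoretic form of the Cauchy transform: since $\xi\mapsto(1-\bar\xi z)^{-1}$ is bounded on $\T$ for each fixed $z\in\C\backslash\T$, the bounded functional calculus for the unitary $M_\zeta$ gives $(I-zM_\zeta^\ast)^{-1}$ equal to multiplication by $(1-\bar\zeta z)^{-1}$, whence
\[
K(g\mu_\gamma)(z)=\left((I-zM_\zeta^\ast)^{-1}g,\,\ID_\zeta\right)\ci{L^2(\mu_\gamma)}
\qquad (z\in\C\backslash\T)
\]
for every $g\in L^2(\mu_\gamma)$. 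Applying this with $g=f_\gamma=V_\gamma f$ and with $g=\ID_\zeta$, and then using that $V_\gamma$ is unitary with $V_\gamma U_\gamma=M_\zeta V_\gamma$ and $V_\gamma\ID_\xi=\ID_\zeta$ (so that $V_\gamma^{-1}(I-zM_\zeta^\ast)^{-1}V_\gamma=(I-zU_\gamma^\ast)^{-1}$), I would rewrite both pieces inside $L^2(\mu)$:
\[
K(f_\gamma\mu_\gamma)(z)=\left((I-zU_\gamma^\ast)^{-1}f,\,\ID_\xi\right)\ci{L^2(\mu)},
\qquad
K\mu_\gamma(z)=\left((I-zU_\gamma^\ast)^{-1}\ID_\xi,\,\ID_\xi\right)\ci{L^2(\mu)}.
\]
Because $\mu=\mu_1$ and $U_1=M_\xi$, the same two formulas with $M_\xi$ in place of $U_\gamma$ represent $K(f\mu)$ and $K\mu$. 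Thus $\calC_{f\mu}$ and $\calC_{f_\gamma\mu_\gamma}$ are exactly the ratio $\left((I-zU^\ast)^{-1}f,\ID_\xi\right)/\left((I-zU^\ast)^{-1}\ID_\xi,\ID_\xi\right)$ evaluated at $U=M_\xi$ and at $U=U_\gamma$, and the theorem becomes the assertion that this ratio does not change when $M_\xi$ is replaced by $U_\gamma$.

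Next I would identify how $U_\gamma$ sits over $M_\xi$. Reading off the defining formula $\widetilde U_\gamma=T_\te+\gamma\left(\fdot,\kf{\bar z\te}{\bar z\bigtriangleup}\right)\ci{\mathcal K_\te}\kf{\ID_z}{0}$, the difference $\widetilde U_\gamma-\widetilde U_1=(\gamma-1)\left(\fdot,b\right)a$ is rank one with range $\spa\{a\}$, where $a=\kf{\ID_z}{0}$ is precisely the cyclic vector defining the measures. Transporting by the spectral unitary that sends $a$ to $\ID_\xi$ gives, on $L^2(\mu)$,
\[
U_\gamma=M_\xi+(\gamma-1)\,(\fdot,\beta)\ci{L^2(\mu)}\,\ID_\xi
\]
for a fixed $\beta\in L^2(\mu)$, so the perturbation $U_\gamma-M_\xi$ has range spanned by the cyclic vector $\ID_\xi$. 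This is the structural feature that makes everything work.

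Finally I would run the rank one resolvent calculation. Taking adjoints and writing $I-zU_\gamma^\ast=(I-zM_\xi^\ast)-z(\bar\gamma-1)(\fdot,\ID_\xi)\beta$, the Sherman--Morrison formula expresses
\[
(I-zU_\gamma^\ast)^{-1}=R(z)+\kappa\,(\fdot,\,p)\ci{L^2(\mu)}\,R(z)\beta,
\qquad R(z)=(I-zM_\xi^\ast)^{-1},\ \ p=R(z)^\ast\ID_\xi,
\]
for a scalar $\kappa=\kappa(z)$; the crucial point is that the ``bra'' vector of the correction is $p=R(z)^\ast\ID_\xi$, exactly because the perturbation points along $\ID_\xi$. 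Substituting into numerator and denominator and cross-multiplying, the only obstruction to $\gamma$-independence is the quantity
\[
(f,p)\,(R(z)\ID_\xi,\ID_\xi)-(R(z)f,\ID_\xi)\,(\ID_\xi,p),
\]
and the identities $(f,p)=(f,R(z)^\ast\ID_\xi)=(R(z)f,\ID_\xi)$ and $(\ID_\xi,p)=(R(z)\ID_\xi,\ID_\xi)$ force this to vanish identically. Hence the ratio is unchanged, giving $\calC_{f\mu}(z)=\calC_{f_\gamma\mu_\gamma}(z)$ at every $z\in\C\backslash\T$ where $K\mu_\gamma(z)\neq0$, which is the claim. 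I expect the main obstacle to be the bookkeeping of the first two steps rather than the algebra of the last one: one must verify the operator representation of $K$ uniformly for $z$ inside and outside $\T$, and keep straight that $V_\gamma$ conjugates $U_\gamma$ (not $M_\xi$) to $M_\zeta$, so that the relevant perturbation lives entirely in the cyclic direction $\ID_\xi$. Once that alignment is in place, the cancellation above is automatic.
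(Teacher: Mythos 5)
Your proof is correct, but it takes a genuinely different route from the paper, which in fact contains no proof of this theorem at all: the result is quoted from \cite{AbaLiawPolt}, where the analogous formula is proved for self-adjoint rank one perturbations, together with the remark that it ``is easily transferred'' to the unitary case. Your argument is instead a direct, self-contained proof inside the unitary setting: you represent both Cauchy transforms as resolvent matrix elements, $K(g\mu_\gamma)(z)=\left((I-zM_\zeta^\ast)^{-1}g,\ID_\zeta\right)\ci{L^2(\mu_\gamma)}$, pull them back to $L^2(\mu)$ via $V_\gamma$ (correctly using that $V_\gamma$ conjugates $U_\gamma^\ast$, not $M_\xi^\ast$, to $M_\zeta^\ast$), and then exploit the structural fact --- stated explicitly in Section \ref{s-beta} of the paper as $\hat U_\gamma=M_\xi+(\gamma-1)(\fdot,\bar\xi)\ci{L^2(\mu)}\ID_\xi$ --- that the range of the perturbation is spanned by the cyclic vector $\ID_\xi$. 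Your Sherman--Morrison step checks out, and can even be streamlined: with $R(z)=(I-zM_\xi^\ast)^{-1}$ and bra vector $p=R(z)^\ast\ID_\xi$, the numerator and denominator factor as $(R(z)f,\ID_\xi)\bigl[1+\kappa(R(z)\beta,\ID_\xi)\bigr]$ and $(R(z)\ID_\xi,\ID_\xi)\bigl[1+\kappa(R(z)\beta,\ID_\xi)\bigr]$, and the common factor, which equals $\bigl(1-z(\bar\gamma-1)(R(z)\beta,\ID_\xi)\bigr)^{-1}$ and is therefore nonzero, cancels without any cross-multiplication. Two bookkeeping points you flag are indeed fine: $I-zM_\xi^\ast$ and $I-zU_\gamma^\ast$ are invertible for every $z\in\C\backslash\T$ since both operators are unitary, so the representation of $K$ holds on both sides of the circle, and the identity is obtained wherever $K\mu$ and $K\mu_\gamma$ do not vanish (your factorization even shows these vanish simultaneously), which is all the a.e.\ qualifier in the statement requires. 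What your route buys is a proof that never leaves the unitary setting and makes the mechanism visible --- the formula holds precisely because the perturbation points along the cyclic direction; what the paper's route buys is brevity, at the cost of deferring both the resolvent computation and the self-adjoint-to-unitary transfer to the cited reference.
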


\section{Matrix representations of unitary dilations and rank $n$ unitary perturbations}\label{s-main}
Recall that $U_\OZ$ is a cnu contraction on $\cH$ with deficiency indices $(n,n)$, $n\in\N$, with characteristic operator function $\Theta$, that the operator-valued function $\bigtriangleup(\xi) = (\OID_\cD-\Theta^\ast(\xi)\Theta(\xi))^{1/2}$ is defined for $\xi\in\T$ a.e.~$m$.
Recall the definition of the rank $n$ perturbation $U_A$ of $U_\OZ$ given by \eqref{d-UA}. In this section, we assume that $A$ is a unitary operator, so $U_A$ is a unitary operator.
Finally, recall the definition of the space $\cX(\xi) = \{f \in \cD : \bigtriangleup(\xi) f \neq 0\}$ for $\xi \in\T$ a.e.~$m$, and let $N(\xi)$ denote the function of spectral multiplicity of the absolutely continuous part $(U_A)\ti{ac}$ of the perturbed operator.

\begin{theo}\label{t-main}
Consider the rank $n$ perturbation $U_A$ given by \eqref{d-UA} and assume that $A$ is unitary.
Then the spectral multiplicity function $N(\xi) = \dim \cX(\xi)$ for $\xi\in\T$ a.e.~$m$. In particular, the operator $U_A$ has no absolutely continuous part on a Borel set $B\subset\T$ if and only if $\dim \cX (\xi) = 0$ for $\xi\in B$ a.e.~$m$; or, equivalently, $\Theta(\xi)$ is unitary for $\xi \in B$ a.e.~$m$.
\end{theo}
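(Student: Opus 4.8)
The plan is to embed the unitary perturbation $U_A$ into the minimal unitary dilation $U$ of $U_\OZ$ and to transfer the multiplicity formula of Theorem~\ref{t-spectralmult} to $U_A$. Let $U$ be the minimal unitary dilation of $U_\OZ$ on $\mathcal K = \HI\oplus\cH\oplus\Hw$. By Theorem~\ref{t-Schreiber} the operator $U$ is purely absolutely continuous, and by Theorem~\ref{t-spectralmult} its multiplicity function equals $n+\dim\cX(\xi)$ for a.e.\ $\xi\in\T$. On the two outer summands $U$ acts by shifts: $U|_{\Hw}$ is a unilateral shift of multiplicity $n$ with wandering space $\mathcal X=\Hw\ominus U\Hw$, and $U^\ast|_{\HI}$ is a unilateral shift of multiplicity $n$ with wandering space $\mathcal X_\ast=\HI\ominus U^\ast\HI$; the only couplings between $\cH$ and the outer summands are built from the defect operators $D$ and $D_\ast$ of $U_\OZ$.

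First I would write $U$ out as an operator matrix with respect to this decomposition, isolating the ``seam'' where $\cH$ meets $\mathcal X$ and $\mathcal X_\ast$. The perturbation step is to add a rank $2n$ operator supported on $\cH\oplus\mathcal X\oplus\mathcal X_\ast$ that (i) replaces the compressed action of $U_\OZ$ on $\cH$ by the prescribed unitary $U_A$, and (ii) severs the couplings coming from $D$ and $D_\ast$ and instead feeds $\mathcal X_\ast$ directly into $\mathcal X$. Choosing the new coupling to be a unitary $\mathcal X_\ast\to\mathcal X$, the perturbed operator $\tilde U$ is again unitary on $\mathcal K$, now leaves $\cH$ invariant with $\tilde U|_\cH=U_A$, and consequently reduces as $\tilde U=U_A\oplus U_A'$ with $U_A'$ unitary on $\mathcal K\ominus\cH=\HI\oplus\Hw$.

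Splicing the forward shift on $\Hw$ and the backward shift on $\HI$ through the new unitary $\mathcal X_\ast\to\mathcal X$ coupling turns $U_A'$ into a \emph{bilateral} shift of multiplicity $n$, i.e.\ $U_A'$ is unitarily equivalent to multiplication by $\xi$ on $L^2(\T,\C^n)$; in particular $U_A'$ is purely absolutely continuous with constant multiplicity $n$. On the other hand $\tilde U-U$ has finite rank, hence is trace class, so the Kato--Rosenblum theorem gives that $(\tilde U)\ti{ac}$ is unitarily equivalent to $U\ti{ac}=U$; therefore the multiplicity function of $\tilde U$ is still $n+\dim\cX(\xi)$ a.e. Since multiplicities add under the orthogonal sum $\tilde U=U_A\oplus U_A'$ and $U_A'$ contributes $n$ everywhere, we obtain
\[
N(\xi)+n \;=\; n+\dim\cX(\xi),
\qquad\text{so}\qquad
N(\xi)=\dim\cX(\xi)
\]
for a.e.\ $\xi\in\T$.

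The two remaining equivalences then follow at once: $U_A$ has no absolutely continuous part on a Borel set $B$ precisely when $N(\xi)=\dim\cX(\xi)=0$ for a.e.\ $\xi\in B$, and because $\Theta(\xi)$ is a square matrix on the $n$-dimensional space $\cD$, the condition $\dim\cX(\xi)=0$ is equivalent to $\bigtriangleup(\xi)=(\OID_\cD-\Theta^\ast(\xi)\Theta(\xi))^{1/2}=\OZ$, that is, to $\Theta^\ast(\xi)\Theta(\xi)=\OID_\cD$, which for a square matrix means exactly that $\Theta(\xi)$ is unitary. The step I expect to be the main obstacle is the explicit matrix description of $U$ at the seam together with the verification that a single rank $2n$ perturbation \emph{simultaneously} yields $U_A$ on $\cH$ and a clean multiplicity-$n$ bilateral shift on the complement; making the couplings and their adjoints fit so that $\tilde U$ is genuinely unitary---not merely contractive---and that $\cH$ truly reduces it is the delicate bookkeeping on which the whole argument turns.
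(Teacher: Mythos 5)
Your proposal is correct and takes essentially the same approach as the paper: both construct a rank $2n$ unitary perturbation of the minimal unitary dilation that splices the incoming and outgoing shift tails into a multiplicity-$n$ bilateral shift while restricting to $U_A$ on $\cH$, and then combine Theorem \ref{t-Schreiber}, the Kato--Rosenblum invariance of the absolutely continuous part under finite rank perturbation, and the multiplicity formula of Theorem \ref{t-spectralmult} to conclude $N(\xi)=\dim\cX(\xi)$ a.e.\ $m$. The ``delicate bookkeeping'' you flag at the seam is exactly what the paper carries out via the explicit initial/range space decompositions \eqref{decomp1}.
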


Taking $B=\T$, we immediately obtain the main result of this paper. Recall that $\mu_A$ denotes the scalar-valued spectral measure associated with $U_A$ via the direct integral \eqref{d-directsum}.

\begin{cor}\label{c-main}
 Assume the above setting. The scalar-valued spectral measure $\mu_A$ is singular with respect to Lebesgue measure $m$ on the unit circle $\T$ if and only if $\dim \cX (\xi) = 0$ for $\xi\in \T$ a.e.~$m$. The latter is equivalent to the statement that $\Theta$ is inner.
\end{cor}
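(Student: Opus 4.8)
The plan is to obtain the multiplicity formula $N(\xi)=\dim\cX(\xi)$ by computing, in two different ways, the spectral multiplicity of the absolutely continuous part of a single auxiliary unitary operator living on the dilation space. First I would form the minimal unitary dilation $U$ of $U_\OZ$ on $\mathcal{K}=\HI\oplus\cH\oplus\Hw$ and record two facts about it: by Theorem \ref{t-Schreiber} the operator $U$ is purely absolutely continuous, and by Theorem \ref{t-spectralmult} (applied to $C=U_\OZ$, so that $\partial\ci{C}=\partial\ci{C^\ast}=n$) its spectral multiplicity equals $n+\dim\cX(\xi)$ for $\xi\in\T$ a.e.\ $m$. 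Here $U$ acts as a unilateral shift of multiplicity $n$ on $\Hw\cong\bigoplus_{k\ge 0}U^k\mathcal{X}$ and as a backward shift of multiplicity $n$ on $\HI$, the two half-line shifts being coupled through $\cH$ exactly so that $P_\cH U^m|_\cH=U_\OZ^m$.

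Second, and this is the technical heart, I would define a unitary operator $\widetilde U$ on the same space $\mathcal{K}$ that differs from $U$ by a rank $2n$ operator and is designed so that \emph{(i)} $\cH$ reduces $\widetilde U$ with $\widetilde U|_\cH=U_A$, and \emph{(ii)} the restriction $W:=\widetilde U|_{\cH^\perp}$ to $\cH^\perp=\HI\oplus\Hw$ is a bilateral shift of multiplicity $n$. The construction reroutes the couplings supported on the $2n$-dimensional subspace $\mathcal{X}\oplus\mathcal{X}_\ast$ together with the defect spaces $\cD,\cD_\ast\subset\cH$: the operator $A$ is inserted so as to send $\cD$ into $\cD_\ast$ inside $\cH$, while the incoming tail $\HI$ and the outgoing tail $\Hw$, each a half-line shift of multiplicity $\dim\mathcal{X}_\ast=\dim\mathcal{X}=n$, are joined directly into a single full-line shift. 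The main obstacle is the bookkeeping: one must identify $\mathcal{X}$ and $\mathcal{X}_\ast$ with $\cD_\ast$ and $\cD$ through the explicit Sz.-Nagy--Foia\c s matrix of $U$, verify by direct computation that the rerouted operator $\widetilde U$ is genuinely unitary, and check that severing the coupling through $\cH$ leaves precisely a multiplicity-$n$ bilateral shift $W$, which is purely absolutely continuous of constant multiplicity $n$.

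Finally I would compute the multiplicity of $(\widetilde U)\ti{ac}$ twice. On one hand $\widetilde U-U$ has finite rank, hence is trace class, so by the Kato--Rosenblum theorem $(\widetilde U)\ti{ac}\cong U\ti{ac}=U$; thus $(\widetilde U)\ti{ac}$ has multiplicity $n+\dim\cX(\xi)$ a.e.\ $m$. On the other hand $\widetilde U=U_A\oplus W$ with $W$ purely absolutely continuous of multiplicity $n$, whence $(\widetilde U)\ti{ac}=(U_A)\ti{ac}\oplus W$ has multiplicity $N(\xi)+n$ at $\xi$. Equating the two expressions yields $N(\xi)=\dim\cX(\xi)$ for a.e.\ $\xi\in\T$, which is the assertion. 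The ``in particular'' statement then follows at once: $(U_A)\ti{ac}$ vanishes on a Borel set $B$ iff $N(\xi)=0$ a.e.\ on $B$ iff $\dim\cX(\xi)=0$, i.e.\ $\bigtriangleup(\xi)=\OZ$; and since $\OID_\cD-\Theta^\ast(\xi)\Theta(\xi)=\OZ$ with $\dim\cD=\dim\cD_\ast=n$ forces $\Theta(\xi)$ to be an isometry between equidimensional spaces, this is equivalent to $\Theta(\xi)$ being unitary.
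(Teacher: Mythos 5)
Your proposal is correct and is essentially the paper's own argument: the paper likewise realizes the rank $n$ perturbation by constructing a rank $2n$ unitary perturbation $\widetilde W$ of the minimal unitary dilation that splits as $U_A \oplus [\text{bilateral shift of multiplicity } n]$ on $\cH \oplus \cH^\perp$, and then combines Theorem \ref{t-Schreiber}, Theorem \ref{t-spectralmult}, and the Kato--Rosenblum theorem to equate $N(\xi)+n$ with $n+\dim\cX(\xi)$, giving $N(\xi)=\dim\cX(\xi)$ a.e.~$m$. Your concluding step, that $\dim\cX(\xi)=0$ a.e.\ forces $\bigtriangleup(\xi)=\OZ$ and hence (by equidimensionality of the finite-dimensional defect spaces) that $\Theta$ is inner, is also exactly how the corollary follows in the paper.
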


\begin{rem}
For $n=1$, this result is a weaker version of Theorem \ref{t-delta}.
Our proof is purely model theoretic and hence geometric, as opposed to the analytic proof of Theorem \ref{t-delta}.
\end{rem}

\begin{proof}[Proof of Theorem \ref{t-main}]
Assume the setting of Theorem \ref{t-main}. 
Dilation theory tells us that the minimal unitary dilation $W$ lives on a larger Hilbert space $\mathcal{K} \supset \cH$.
Recall that $\mathcal K = \HI \oplus \cH \oplus \Hw$, where $\Hw= \sum_{n\ge 0}\oplus W^n \mathcal{X}$ and $\mathcal{X} = \Hw\ominus W \Hw$, and analogously for $\HI$ and $\mathcal X_\ast$.

Let $w$ be the independent variable in $L^2(\mathcal X)$ and let $M_w$ denote the multiplication operator by the variable $w$. Without loss of generality, we have $\mathcal K = \HHI \oplus \cH \oplus \HHw$.
Indeed, we have the following unitary equivalences:
\[
(W  \text{ on }\Hw) \ \cong \ (M_w \text{ on } \HHw),
\qquad\text{and}\qquad
(W  \text{ on }\HI) \ \cong \ (M_w \text{ on } \HHI).
\]
In the second unitary equivalence, we have identified $\cX$ with $\cX_\ast$ which is possible, because $\dim \cX= \dim \cX_\ast = n<\infty$.
The reason for identifying the spaces $\cX$ and $\cX_\ast$ is that for the rank $2n$ perturbation, $\widetilde W$ (defined below in equation \eqref{d-tildeW}), of the action of $W$ on $\mathcal G\oplus \mathcal G_\ast$ we have 
\[
(\widetilde W  \text{ on }\Hw \oplus \HI) \ \cong \ (M_w \text{ on } L^2(\cX)).
\]

In what follows, we will further decompose the spaces $\HHI$, $\cH$ and $\HHw$; see \eqref{decomp1} below. In order to recall for the reader the precise structure of the dilation space $\mathcal{K}$, we add to the classical notation and use the symbol $\boxplus = \oplus$ for the orthogonal direct sum in the above decomposition; that is,
\[
\mathcal K = \HHI \boxplus \cH \boxplus \HHw.
\]
The space $\mathcal X$ is $n-$dimensional, because the unitary dilation $W$ is minimal.

In the initial and range spaces of $W$, we use the decompositions:
\begin{align}
\begin{array}{lrclclcl}\label{decomp1}
\text{initial space:}& \mathcal{K} 
&=&  M_{\bar w} \HHI   \oplus M_{\bar w}\mathcal X 
&\boxplus& \cD^\perp \oplus \cD 
&\boxplus& \HHw,
\vspace{.1cm}\\
\text{range space:}&  \mathcal{K}
 &=& \HHI 
 &\boxplus& \cD_\ast^\perp \oplus \cD_\ast 
 &\boxplus& \mathcal X \oplus M_w\HHw .
\end{array}
\end{align}

Then we have the following actions
\[
W|\ci{M_{\bar w} \HHI\boxplus\HHw} = M_{w}\qquad\text{and}\qquad
W|\ci{\cD^\perp} = U_\OZ|\ci{\cD^\perp},
\]
and any choice of rank $n$ unitary operators
\[
W|\ci{M_{\bar w}\mathcal X}:M_{\bar w}\mathcal X \to \cD_\ast
\qquad\text{and}\qquad
W|\ci{\cD}: \cD \to \mathcal X
\]
ensures that $W$ is the minimal unitary dilation of $U_\OZ$.

Recall that $A:\cD\to\cD_\ast$ is a rank $n$ unitary operator.
Consider the rank $2n$ unitary perturbation $\widetilde W$ of $W$ given by
\begin{align}\label{d-tildeW}
\widetilde W|\ci{M_{\bar w}\HHI\boxplus \cD^\perp\boxplus \HHw} = W|\ci{M_{\bar w}\HHI\boxplus \cD^\perp\boxplus \HHw},
\quad\widetilde W|\ci{M_{\bar w}\mathcal X } =  M_w
\quad\text{and}\quad
\widetilde W|\ci{\cD} = A.
\end{align}
Then the operator $\widetilde W$ acts as the bilateral shift when compressed to the reducing subspace
\[
L^2(\mathcal X)
 = M_{\bar w} \HHI   \oplus M_{\bar w}\mathcal X \boxplus\HHw 
= \HHI\boxplus \mathcal X \oplus M_w\HHw.
\]
Further, we have $\widetilde W|\ci{\cH} = U_A$. In other words, we have
\begin{align}
 \widetilde W \cong[\text{bilateral shift on }(\oplus L^2(\mathcal X))] \oplus [U_A\text{ on }\cH].
\end{align}

Let $n(\xi)$, $ n\ti{ac}(\xi)$ and $ \widetilde n\ti{ac}(\xi)$ denote the functions of spectral multiplicity of $W$, $W\ti{ac}$ and $\widetilde W\ti{ac}$, respectively, defined $\xi\in \T$ a.e.~$m$.

By virtue of Theorem \ref{t-Schreiber}, the minimal unitary dilation $W$ of the cnu contraction $U_\OZ$ has purely absolutely continuous spectrum; that is, $n(\xi) = n\ti{ac}(\xi)$, $\xi\in \T$ a.e.~$m$.

Further, we have $ n\ti{ac}(\xi) = \widetilde n\ti{ac}(\xi)$, $\xi\in \T$ a.e.~$m$, by the Kato--Rosenblum theorem (see for example, \cite{katobook}, or \cite{CP}). Indeed, the operator $\widetilde W$ is by definition a rank $2n$ perturbation of the minimal unitary dilation $W$.

Now, Theorem \ref{t-spectralmult} yields the following formula for the multiplicity of $\widetilde W\ti{ac}$:
\begin{align}\label{eq-1}
\widetilde n\ti{ac}(\xi) = n + \dim \cX (\xi)
\qquad \xi\in\T \text{ a.e.~}m.
\end{align}
In equation \eqref{eq-1}, the summand $n$ comes from the multiplicity of the bilateral shift on $L^2(\mathcal X)$.
In particular, the spectral multiplicity $N(\xi)$ of $(U_A)\ti{ac} = (\widetilde W)\ti{ac}|\ci{\cH}$ satisfies
\begin{align}\label{eq-2}
N(\xi) = \dim \cX (\xi)\text{ for }\xi\in\T \text{ a.e.~}m.
\end{align}

The second statement of the theorem follows immediately.
\end{proof}

\begin{rems}
(a) The construction of the unitary dilation $W$ and the perturbation $\widetilde W$ can be extended to the case where the defect operators of $U_\OZ$ are trace class. However, the argument from equation \eqref{eq-1} to \eqref{eq-2} does not hold true in that case.\\
(b) If we choose the actions of $W$ and $\widetilde W$ more carefully, it is possible to view $\widetilde W$ as a rank $n$ perturbation of $W$.
\end{rems}

\section{Operator-valued characteristic functions of $U_A$ for $\|A\|<1$}\label{s-reprtheta}
Let $U_A$ be defined by \eqref{d-UA}.
Assume that $A$ is a strict contraction; that is, $\|A\|<1$ which, since $n<\infty$, is equivalent to the statement that $\|Af\|=\|f\|$ for $f\in \cD$ implies $f=0$. As mentioned in Subsection \ref{ss-GenMod}, this implies that the perturbation $U_A$ is a cnu contraction with deficiency indices $(n,n)$. From model theory we know that $U_A$ has a corresponding operator-valued characteristic function $\Theta_A(z):\cD\to \cD_\ast$. The goal of this section is to find a representation of $\Theta_A(z)$.

For a fixed strict contraction $A:\cD\to \cD_\ast$, let $\{k_i\}_{i=1}^n$ and $\{\widetilde k_i\}_{i=1}^n$ be orthonormal bases of $\cD$ and $\cD_\ast$, respectively, for which the operator $A$ is ``diagonal''; that is, 
\[
Ak_i = \beta_i \tilde k_i
\qquad \text{for }\beta_i\in \D\text{ and } i=1,\hdots, n.
\]

Let $\OID$, $\OID_\cD$ and $\OID_{\cD_\ast}$ denote the identity operators on $\cH$, $\cD$ and $\cD_\ast$, respectively.
Notice that the operator-valued characteristic function is a holomorphic function from $\D$ to the rank $n$ linear operator. We use the standard notation $|A|^2 = A^\ast A$.

\begin{theo}\label{t-char}
Assume the above setting for $U_A$, $A$, $\{k_i\}_{i=1}^n$ and $\{\widetilde k_i\}_{i=1}^n$. The action of the operator-valued characteristic function $\Theta_A(z)$ is
\begin{align}\label{formula}
(\Theta_A k_j)(z)
=
-\beta_j k_j + z \oplus_{i=1}^n  \frac{ (1 - |\beta_i|^2)\langle(\OID - z U_\OZ^\ast )^{-1}k_i, \tilde k_i \rangle}{\langle(\OID - z U_\OZ^\ast )^{-1} \tilde k_i , \tilde k_i \rangle - z \bar\beta_i  \langle(\OID - z U_\OZ^\ast )^{-1}k_i , \tilde k_i \rangle}  \, \tilde k_i
\end{align}
for $j=1,\hdots,n$ and $z\in\D$.
\end{theo}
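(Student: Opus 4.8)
The plan is to compute $\Theta_A$ directly from the characteristic-function formula of Theorem~\ref{t-tte} applied to the cnu contraction $U_A$, after reducing the resolvent of $U_A^\ast$ to that of $U_\OZ^\ast$. First I would determine the defect data of $U_A$. Because $U_\OZ$ maps $\cD^\perp$ isometrically onto $\cD_\ast^\perp$, while $U_A$ differs from $U_\OZ$ only on $\cD$ (where it acts as $A$), computing $\OID-U_A^\ast U_A$ and $\OID-U_A U_A^\ast$ shows that the defect spaces of $U_A$ are again $\cD$ and $\cD_\ast$, and that in the diagonalizing bases $D_{U_A}k_i=(1-|\beta_i|^2)^{1/2}k_i$ and $D_{U_A^\ast}\tilde k_i=(1-|\beta_i|^2)^{1/2}\tilde k_i$. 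Substituting into $\Theta_A(z)=-U_A+z\,D_{U_A^\ast}(\OID-zU_A^\ast)^{-1}D_{U_A}$ and applying the result to $k_j$ produces the first summand $-U_Ak_j=-\beta_j\tilde k_j$, so the entire content is reduced to evaluating $P_\ast(\OID-zU_A^\ast)^{-1}k_j$.

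The crux is to express this perturbed resolvent through $R:=(\OID-zU_\OZ^\ast)^{-1}$. Since $U_A^\ast=U_\OZ^\ast+P(A^\ast P_\ast-U_\OZ^\ast)$ is a perturbation whose range lies in the $n$-dimensional space $\cD$, I would write $y:=(\OID-zU_A^\ast)^{-1}k_j=R\bigl(k_j+z\textstyle\sum_i a_ik_i\bigr)$ and determine the coefficients $a_i$ from a finite linear system. Two ingredients make this tractable: the normalization $\Theta(0)=\OZ$, which says $P_\ast U_\OZ P=\OZ$, i.e.\ $U_\OZ\cD\perp\cD_\ast$ and dually $U_\OZ^\ast\cD_\ast\perp\cD$; and the resolvent identity $U_\OZ^\ast R=\tfrac1z(R-\OID)$, which converts every factor of $U_\OZ$ acting on a defect vector into the resolvent matrix elements $\langle Rk_i,\tilde k_i\rangle$ and the diagonal elements entering the denominator of \eqref{formula}. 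Pairing the equation $(\OID-zU_A^\ast)y=k_j$ with $k_i$ gives the clean relation $\langle y,k_i\rangle=\delta_{ij}+z\bar\beta_i\langle y,\tilde k_i\rangle$, which together with the resolvent expansion closes the system for the quantities $\langle y,\tilde k_i\rangle$.

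Solving this system — a Sherman--Morrison inversion in the diagonalizing basis — and feeding the result back through $D_{U_A^\ast}$ and the scalar factor $(1-|\beta_j|^2)^{1/2}$ coming from $D_{U_A}$ then yields the linear-fractional expression in \eqref{formula}. I expect the main obstacle to be precisely this reduction and the ensuing algebra: one must control how the $n$ channels couple through the \emph{off-diagonal} resolvent matrix elements, and verify that, after invoking $\Theta(0)=\OZ$ and $U_\OZ^\ast R=\tfrac1z(R-\OID)$, the solution collapses into the stated direct sum over $i=1,\dots,n$, each denominator reducing to $\langle R\tilde k_i,\tilde k_i\rangle-z\bar\beta_i\langle Rk_i,\tilde k_i\rangle$. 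For $n=1$ the system is scalar and the whole computation is a one-line Sherman--Morrison step from which the coefficient reads off immediately; the substance of the general case lies entirely in the bookkeeping that disentangles the channels.
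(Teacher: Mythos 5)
Your plan follows the same route as the paper's own proof: compute the defect operators $D\ci{U_A}=(\OID_{\cD}-|A|^2)^{1/2}P$ and $D\ci{U_A^\ast}=(\OID_{\cD_\ast}-|A^\ast|^2)^{1/2}P_\ast$, insert them into $\Theta_A(z)=-U_A+zD\ci{U_A^\ast}(\OID-zU_A^\ast)^{-1}D\ci{U_A}$, and reduce the perturbed resolvent to $R=(\OID-zU_\OZ^\ast)^{-1}$ by writing $U_A^\ast=U_\OZ^\ast+(A^\ast-U_\OZ^\ast)P_\ast$; your identity $zU_\OZ^\ast R=R-\OID$ is exactly what the paper uses to convert $1+z\langle RU_\OZ^\ast\tilde k_i,\tilde k_i\rangle$ into the denominator term $\langle R\tilde k_i,\tilde k_i\rangle$. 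Up to that point your computations (the defect data, the first summand $-\beta_j\tilde k_j$, the relation $\langle y,k_i\rangle=\delta_{ij}+z\bar\beta_i\langle y,\tilde k_i\rangle$) are correct and match the paper step for step.

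The gap is the final step, which you defer as ``bookkeeping'': it is not bookkeeping, it is the entire content of the theorem for $n>1$. Carrying out your own reduction with the two ingredients you list, the unknowns $\langle y,\tilde k_l\rangle$, $y=(\OID-zU_A^\ast)^{-1}k_j$, satisfy
\[
\sum_{l=1}^n\Bigl[\langle R\tilde k_l,\tilde k_i\rangle-z\bar\beta_l\langle Rk_l,\tilde k_i\rangle\Bigr]\,\langle y,\tilde k_l\rangle \;=\;\langle Rk_j,\tilde k_i\rangle,
\qquad i=1,\dots,n,
\]
and the Sherman--Morrison/Woodbury inverse of this system involves the \emph{full} matrices $\langle R\tilde k_l,\tilde k_i\rangle$ and $\langle Rk_l,\tilde k_i\rangle$. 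Nothing you invoke makes the off-diagonal entries disappear: already at $z=0$ one has $\langle Rk_l,\tilde k_i\rangle=\langle k_l,\tilde k_i\rangle$, a generic Gram matrix between the two singular bases of $A$, since $\cD$ and $\cD_\ast$ are different subspaces of $\cH$. Solving the coupled system honestly therefore produces a matrix-valued linear-fractional expression --- essentially the ``more complicated formula'' of \cite{BallLubin,fuhrmann} --- which collapses to the decoupled expression \eqref{formula} only when both Gram-type matrices are diagonal, and your proposal supplies no argument for that. Note how the paper's proof proceeds at this point: it never sets up the coupled system, but instead replaces $P_\ast x_i$ by $\langle x_i,\tilde k_i\rangle\tilde k_i$ in the resolvent equation for $x_i=(\OID-zU_A^\ast)^{-1}k_i$ (the step producing \eqref{key} and the line following it), so that each channel is a scalar problem from the outset. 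In other words, you have correctly located the subtlety --- the coupling of the $n$ channels through off-diagonal resolvent entries --- that the paper's channel-by-channel argument passes over, but your proof stops exactly there: for $n=1$ your argument is complete, while for $n>1$ the claimed collapse is unproven and does not follow from $\Theta(0)=\OZ$ and the resolvent identity alone.
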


 \begin{rem}
A similar but more complicated formula was obtained in \cite{BallLubin, fuhrmann}. In particular, the operator function $S(z)$ that occurs on the right hand side of their formula is related to the operator-valued characteristic function $\Theta_A(z)$ via a unitary change of basis, see \cite{BallLubin}. The authors used their formula to prove some interesting spectral properties of $U_A$  and the higher rank unitary perturbations.
 \end{rem}

\begin{proof}
Take $z\in\D$ and recall that the operator-valued characteristic function of the cnu contraction $U_A$ is given by
 \[
 \Theta_A(z) = -U_A +z D\ci{U_A^*} (\OID - z U_A^*)^{-1} D\ci{U_A},
 \]
where $D\ci{U_A}= (\OID_{\cD} - U_A^\ast U_A)^{1/2}$ and $D\ci{U_A^\ast}= (\OID_{\cD_\ast} - U_A U_A^\ast)^{1/2}$ are the defect operators of $U_A$ and $U_A^\ast$, respectively.

From the definition \eqref{d-UA} of $U_A$ we obtain
\begin{align*}
 D\ci{U_A}  = (\OID_{\cD} - |A|^2)^{1/2} P
 \qquad\text{and}\qquad
D\ci{U_A^\ast} = (\OID_{\cD^\ast} - |A^\ast|^2)^{1/2} P_\ast,
\end{align*}
where $P$ and $P_\ast$ are the orthogonal projections of $\cH$ onto $\cD$ and $\cD_\ast$, respectively.
Fix $j=1, 2, \hdots, n$.
By the above choice of bases, $\{k_i\}$ and $\{\tilde k_i\}$, we have
 \begin{align}\label{key}
 (\Theta_A k_j)(z)
 = -\beta_j \tilde k_j + z \oplus (1 - |\beta_i|^2) (1 - |\beta_i|^2) \langle x_i,\tilde k_i \rangle  \tilde k_i, 
 \end{align}
 where $x_i = (\OID - z U_A^\ast)^{-1}k_i \in \cH$, $z\in\D$.

By the definition of $U_A$, we have $U_A^\ast = U_\OZ^\ast + (A^\ast - U_\OZ^\ast) P_\ast$. Hence, we obtain
\begin{align*}
 k_i 
= [(\OID-zU_\OZ^\ast ) x_i + z \langle x_i, \tilde k_i \rangle (U_\OZ^\ast -A^\ast) \tilde k_i ],
 \end{align*}
 and so
\[ 
x_i
 = 
 (\OID-zU_\OZ^\ast )^{-1}[k_i - z\langle x_i, \tilde k_i \rangle (U_\OZ^\ast -A^\ast) \tilde k_i ].
\]
 
 Taking the inner product of both sides with $\tilde k_i $ we have
 \begin{align*}
 \langle x_i,\tilde k_i \rangle = \langle(\OID-zU_\OZ^\ast )^{-1}k_i, \tilde k_i \rangle - z\langle x_i, \tilde k_i \rangle \langle(\OID-zU_\OZ^\ast )^{-1}(U_\OZ^\ast -A^\ast) \tilde k_i , \tilde k_i \rangle.
 \end{align*}
Solving for $\langle x_i,\tilde k_i \rangle$ yields
\[
 \langle x_i,\tilde k_i \rangle = 
\frac{\langle(\OID - z U_\OZ^\ast )^{-1}k_i, \tilde k_i \rangle}{1+ z \langle(\OID - z U_\OZ^\ast )^{-1}(U_\OZ^\ast  - A^\ast) \tilde k_i , \tilde k_i \rangle} \,.
\]
Recall that $A^\ast \tilde k_i = \bar\beta_i k_i$, and using the series expansion for $(\OID - z U_\OZ^\ast )^{-1}$, we obtain
\[
1+ z \langle(\OID - z U_\OZ^\ast )^{-1}U_\OZ^\ast\tilde k_i , \tilde k_i \rangle=\langle(\OID - z U_\OZ^\ast )^{-1} \tilde k_i , \tilde k_i \rangle.
\]
Therefore, we have
\[
 \langle x_i,\tilde k_i \rangle = 
\frac{\langle(\OID - z U_\OZ^\ast )^{-1}k_i, \tilde k_i \rangle}{\langle(\OID - z U_\OZ^\ast )^{-1} \tilde k_i , \tilde k_i \rangle - z \bar\beta_i  \langle(\OID - z U_\OZ^\ast )^{-1}k_i , \tilde k_i \rangle} .
\]

The statement in the theorem follows when we substitute the latter expression into equation \eqref{key}.
\end{proof}

\section{Characteristic functions in the case $n=1$}\label{s-beta}
In the remainder of this paper we consider the case of $n=1$.
In definition \eqref{d-UA} we parametrized the rank one contraction operators $A:\cD\to \cD_\ast$ by $\gamma\in\D\cup\T$. We use the notation $U_\gamma$ for the perturbed operator.

The following setup is unitarily equivalent to the setting for the Aleksandrov-Clark theory described in Subsection \ref{sss-ACTheory}. 
We state our results using the spectral representation of the operator $U_1$; that is, on the space $L^2(\mu)$, where $\mu=\mu_1$ denotes the spectral measure of the unitary operator $U_1$ with respect to the cyclic vector $\tilde k\in\cD_\ast$.
We denote the independent variable of $L^2(\mu)$ by $\xi$, the operator which acts as multiplication by the independent variable by $M_\xi$, the function in $L^2(\mu)$ defined so that $\bar\xi(\xi) = \bar\xi$ by $\bar\xi$, and the constant function identically equal to $1$ in $L^2(\mu)$ by $\ID_\xi$.
Then the cnu contraction $\hat U_0$ on $L^2(\mu)$ (which is unitarily equivalent to $U_\OZ$ in Subsection \ref{ss-GenMod} and $\widetilde U_0$ in Subsection \ref{sss-ACTheory}) is given by
\begin{align}\label{f-U0}
\hat U_0&= M_\xi - ( \fdot, \bar \xi )\ci{L^2(\mu)}\ID_\xi\quad\text{on }L^2(\mu).
\end{align}
The defect spaces of $\hat U_0$ are $\cD\ci{\hat U_0} = \spa\{\bar\xi\}$ and $\cD\ci{\hat U_0^*} = \spa\{\ID_\xi\}$. (Note we have implicitly identified them by sending $\bar\xi$ to $\ID_\xi$.)

The rank one unitary perturbations of $\hat U_0$ are given by
\begin{align}\label{f-Ugamma}
\hat U_\gamma = \hat U_0 + \gamma ( \fdot,\bar\xi)\ci{L^2(\mu)} \ID_\xi, \qquad |\gamma|=1.
\end{align}
Alternatively, we can write $\hat U_\gamma = M_\xi +(\gamma-1) ( \fdot,\bar\xi)\ci{L^2(\mu)} \ID_\xi$ as a rank one perturbation of the unitary operator $M_\xi$.

\subsection{Representation for the characteristic functions for $\hat U_\beta$, $|\beta|<1$}\label{ss-RepBeta}
The family of operators $U_\beta$ given by \eqref{f-Ugamma} consists of cnu contractions for all $|\beta|<1$.
(We use the symbol $\beta$ in this subsection to avoid possible confusion between
the cases where the parameter is in the unit disc, $|\beta|<1$, and where the parameter
is in the unit circle; that is, $|\gamma|=1$.)
Each such $U_\beta$ has a characteristic function which we denote by $\Theta_\beta$.

Let us identify the one dimensional defect spaces $<\bar\xi>$ and $<\ID_\xi>$ with the complex numbers; that is, we consider the characteristic function $\te_\beta\in H^\infty(\D)$ that satisfies $(\Theta_\beta \bar\xi)(z) = \theta_\beta(z)\ID_\xi$.

Recall that the Cauchy transform of a complex-valued measure $f\tau$ on the unit circle is given by
$K(f\tau)(z)=\int\ci\T\frac{f(\xi)d\tau(\xi)}{1-\bar\xi z}$ for $z\in\C\backslash \T$ and $f\in L^1(|\tau|)$.

\begin{theo}\label{t-char3}
The characteristic functions in the above setup can be expressed in terms of the Cauchy transform
of the measures $\mu$ and $\bar\xi\mu$; namely, we have
\begin{align*}
\theta_\beta (z) = -\beta + z (1-|\beta|^2)\frac{K(\bar\xi\mu)}{K\mu-z\bar\beta K(\bar\xi\mu)}\,,
\qquad |\beta|<1, z\in \D.
\end{align*}
\end{theo}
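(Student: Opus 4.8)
**

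The plan is to specialize the general operator-valued formula from Theorem \ref{t-char} to the rank one case $n=1$ and then recognize the resulting inner products as Cauchy transforms evaluated on the spectral representation. Since $n=1$, the orthonormal bases $\{k_i\}$ and $\{\tilde k_i\}$ each consist of a single vector, so with the identification used in the Aleksandrov--Clark setup I take $k_1 = \bar\xi$ and $\tilde k_1 = \ID_\xi$, and the single eigenvalue is $\beta_1 = \beta$. Substituting $n=1$ into \eqref{formula} collapses the direct sum to one term and yields
\[
\theta_\beta(z) = -\beta + z\,\frac{(1-|\beta|^2)\,\langle(\OID - z U_\OZ^\ast)^{-1}\bar\xi,\ID_\xi\rangle}{\langle(\OID - z U_\OZ^\ast)^{-1}\ID_\xi,\ID_\xi\rangle - z\bar\beta\,\langle(\OID - z U_\OZ^\ast)^{-1}\bar\xi,\ID_\xi\rangle}.
\]
The task then reduces to computing the two scalar resolvent inner products in the concrete model $\hat U_0$ on $L^2(\mu)$ given by \eqref{f-U0}, where $U_\OZ$ is replaced by $\hat U_0$.

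The key computation is to identify $\langle(\OID - z\hat U_0^\ast)^{-1}\ID_\xi,\ID_\xi\rangle$ with $K\mu(z)$ and $\langle(\OID - z\hat U_0^\ast)^{-1}\bar\xi,\ID_\xi\rangle$ with $K(\bar\xi\mu)(z)$. First I would expand the resolvent as the Neumann series $(\OID - z\hat U_0^\ast)^{-1} = \sum_{k\ge 0} z^k (\hat U_0^\ast)^k$, valid for $z\in\D$. The essential point is that, because $\ID_\xi$ is a defect vector of $\hat U_0^\ast$ and $M_\xi^\ast = M_{\bar\xi}$ on $L^2(\mu)$, the action of $(\hat U_0^\ast)^k$ on these vectors agrees with multiplication by $\bar\xi^{\,k}$ in the relevant pairings; equivalently, one checks that $\langle(\hat U_0^\ast)^k\ID_\xi,\ID_\xi\rangle = \int_\T \bar\xi^{\,k}\,d\mu$ and $\langle(\hat U_0^\ast)^k\bar\xi,\ID_\xi\rangle = \int_\T \bar\xi^{\,k}\,\bar\xi\,d\mu$. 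Summing the geometric series $\sum_k z^k\bar\xi^{\,k} = (1-z\bar\xi)^{-1}$ inside the integral then produces exactly the Cauchy transforms $K\mu(z) = \int_\T (1-\bar\xi z)^{-1}d\mu$ and $K(\bar\xi\mu)(z) = \int_\T \bar\xi(1-\bar\xi z)^{-1}d\mu$ by definition \eqref{d-Cauchy}. Substituting these two identities into the collapsed formula above gives precisely the claimed expression for $\theta_\beta(z)$.

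The main obstacle is justifying the replacement of $\hat U_0^\ast$ by $M_{\bar\xi}$ inside each term of the series. Although $\hat U_0^\ast = M_{\bar\xi} - (\,\cdot\,,\ID_\xi)\,\bar\xi$ differs from $M_{\bar\xi}$ by a rank one term, that rank one term is designed to annihilate the cyclic structure in a way that leaves the two pairings under consideration unchanged. Concretely, I would verify by induction on $k$ that the correction terms telescope or vanish when paired against $\ID_\xi$, using that $\hat U_0^\ast\ID_\xi$ and the spectral-representation identities $M_\xi^\ast = M_{\bar\xi}$ hold in $L^2(\mu)$; this is the one spot requiring care, since it is precisely where the defect structure of $\hat U_0$ enters. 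Once that bookkeeping is settled the result is immediate, and I expect it to be a short but slightly delicate induction rather than a deep difficulty.
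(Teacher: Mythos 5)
Your first step---collapsing \eqref{formula} to $n=1$ with $k_1=\bar\xi$, $\tilde k_1=\ID_\xi$, $\beta_1=\beta$---is correct, and it is exactly how the paper's first proof begins. The genuine gap is in the resolvent computation: both identities you assert are false, and the induction you propose in order to justify them already fails at $k=1$. In the model \eqref{f-U0} one has $\hat U_0^\ast = M_{\bar\xi} - (\fdot,\ID_\xi)\ci{L^2(\mu)}\,\bar\xi$, and since $\mu$ is a probability measure, $\hat U_0^\ast\ID_\xi = \bar\xi-\bar\xi = 0$. Hence $(\OID - z\hat U_0^\ast)^{-1}\ID_\xi = \ID_\xi$ and
\begin{align*}
\bigl\langle(\OID - z\hat U_0^\ast)^{-1}\ID_\xi,\ID_\xi\bigr\rangle \equiv 1 \neq K\mu(z);
\end{align*}
in particular $\langle(\hat U_0^\ast)^k\ID_\xi,\ID_\xi\rangle = 0$ for every $k\ge 1$, whereas your claimed values $\int_\T\bar\xi^k\,d\mu$ vanish for all $k\ge1$ only when $\mu=m$. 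The rank one correction in $\hat U_0^\ast$ does not leave these pairings unchanged; it changes both of them. Indeed, solving $(\OID - z\hat U_0^\ast)x = \bar\xi$ gives $x = \bigl[1 - z\langle x,\ID_\xi\rangle\bigr](\OID - zM_{\bar\xi})^{-1}\bar\xi$, and pairing with $\ID_\xi$ yields
\begin{align*}
\bigl\langle(\OID - z\hat U_0^\ast)^{-1}\bar\xi,\ID_\xi\bigr\rangle
= \frac{K(\bar\xi\mu)}{1+zK(\bar\xi\mu)}
= \frac{K(\bar\xi\mu)}{K\mu}
= \calC_{\bar\xi\mu}(z),
\end{align*}
not $K(\bar\xi\mu)$; the second equality uses the elementary identity $K\mu - zK(\bar\xi\mu) = \int_\T\frac{(1-z\bar\xi)\,d\mu}{1-\bar\xi z} = 1$.

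Your final formula is nonetheless true, but for a reason your argument never supplies: the two correct pairings, $\calC_{\bar\xi\mu}$ and $1$, differ from your claimed ones, $K(\bar\xi\mu)$ and $K\mu$, by the same factor $1/K\mu$, and \eqref{formula} involves the pairings only through the ratio $N/(D-z\bar\beta N)$, in which this common factor cancels. So the argument can be repaired in one line: insert $N = \calC_{\bar\xi\mu}$ and $D=1$ into your collapsed formula and multiply numerator and denominator by $K\mu$. As written, however, the proof rests on false identities justified by an induction that cannot close, so it is not valid. For comparison, the paper's terse first proof asserts the same identification you do, leaving the cancellation implicit; its rigorous route is the alternative proof, which never inverts $\OID - z\hat U_0^\ast$ at all, but instead writes $\hat U_\beta^\ast$ in terms of $M_{\bar\xi}$ and solves for $\langle x,\ID_\xi\rangle$, so that $K\mu$ and $K(\bar\xi\mu)$ enter through the resolvent of the unitary operator $M_{\bar\xi}$, for which $\langle(\OID - zM_{\bar\xi})^{-1}f,\ID_\xi\rangle = K(f\mu)(z)$ genuinely holds.
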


For the convenience of the reader who is not interested in the rank $n$ case, we provide two proofs. In the first proof, we argue that the representation \eqref{formula} of the operator-valued characteristic function reduces to the desired formula. The alternative proof does not depend on Sections \ref{s-main} and \ref{s-reprtheta} and is based on a direct computation, mimicking the steps in the proof of Theorem \ref{t-char}.

\begin{proof}[Proof of Theorem \ref{t-char3}]
Recall that $\cD\ci{U_0}$ is one-dimensional. Hence, any $k\in \cD$ can be represented by $k= c\bar\xi$ for some $c\in \C$.
We obtain $Ak= \beta\tilde k$, or $A:\cD\to\cD_\ast: \bar  {\xi} \mapsto \beta\ID_\xi$ by linearity. In the numerator of the formula in Theorem \ref{t-char}, one can see that $\langle(\OID - z U_\OZ^\ast)^{-1}k, \tilde k\rangle = \langle(1-z\bar  {\xi})^{-1}\bar  {\xi}, \ID_\xi\rangle = K(\bar  {\xi}\mu)$ and similarly in the denominator.
\end{proof}

\begin{proof}[Alternative proof of Theorem \ref{t-char3}]
 Recall that
$\hat U_\beta = M_\xi - (1-\beta) <\cdot, \bar\xi> \ID_\xi
= P\ci{<\ID_\xi>^\perp}\hat U_0 |\ci{<\bar\xi>^\perp} + \beta <\cdot, \bar\xi> \ID_\xi$.
With this identity, it is not hard to compute the defect operators $D\ci{\hat U_\beta^\ast} = (1-|\beta|^2)^{1/2} P\ci{<\ID_\xi>}$ and $D\ci{\hat U_\beta} = (1-|\beta|^2)^{1/2} P\ci{<\bar\xi>}$.
With the representation
$\Theta_\beta(z) = - \hat U_\beta +z D\ci{\hat U_\beta^\ast} (\OID - z \hat U_\beta^\ast)^{-1} D\ci{\hat U_\beta}$
of the characteristic operator function of $\hat U_\beta$, we obtain
\begin{align}
 (\Theta_\beta \bar\xi)(z) 
&=[ -\beta +z (1-|\beta|^2) <x, \ID_\xi>]\, \ID_\xi,
\label{e-thetabeta}
\end{align}
where we have used the notation
\begin{align}\label{d-x}
 x = (\OID - z \hat U_\beta^\ast)^{-1} \bar\xi.
\end{align}

From the latter identity and the definition of $\hat U_\beta$, it follows that
\[
 \bar\xi = (\OID - z M_{\bar\xi}) x + z(1-\bar\beta) <x,\ID_\xi> \bar\xi.
\]
Solving for $x$ (in terms of $<x,\ID_\xi>$ and the other variables), we obtain
\[
 x = \left[ 1 - z(1-\bar\beta) <x,\ID_\xi> \right] (\OID - z M_{\bar\xi})^{-1} \bar\xi.
\]
Taking the inner product of the latter identity with the function $\ID_\xi$ and solving for $<x,\ID_\xi>$ yields
\begin{align}\label{e-xid}
 <x,\ID_\xi> 
 = \frac{K\bar\xi\mu}{1+z(1-\bar\beta) K\bar\xi\mu}
 = \frac{K\bar\xi\mu}{K \mu+\bar\beta z K\bar\xi\mu}\,.
\end{align}

Combining \eqref{e-thetabeta}, \eqref{e-xid} and the fact that
$\theta_\beta(z)= \theta_\beta(z) \ID_\xi = (\Theta_\beta  \bar\xi)(z)$
we obtain the statement of Theorem \ref{t-char3}.
\end{proof}

Theorem \ref{t-char3} and Theorem \ref{t-char1} (below) plus some simple algebra yield
the following corollary.

\begin{cor}(Livsic \cite{Livsic})
The characteristic function $\theta_\beta$ is related to $\theta = \theta_0$ via the linear fractional transformation
\begin{align*}
\theta_\beta (z) = \frac{-\beta+\theta(z)}{1-\bar\beta \theta(z)}\,.
\end{align*}
In particular, the characteristic functions $\theta$ and $\theta_\beta$ are inner simultaneously. And if $\theta_{\beta_0}$ is a finite Blaschke product for some $\beta_0\in \D$, then all characteristic functions $\theta_\beta$ are finite Blaschke products.
\end{cor}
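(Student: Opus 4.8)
The plan is to obtain the linear fractional formula by an elementary reduction of the two explicit representations already in hand, and then to read off the inner and Blaschke statements from the fact that $w\mapsto (w-\beta)/(1-\bar\beta w)$ is an automorphism of $\D$. First I would assemble the two ingredients. Theorem \ref{t-char3} gives
\[
\theta_\beta(z) = -\beta + z(1-|\beta|^2)\frac{K(\bar\xi\mu)}{K\mu - z\bar\beta\,K(\bar\xi\mu)}\,, \qquad |\beta|<1,\ z\in\D,
\]
while specializing to $\beta=0$ (equivalently, invoking Theorem \ref{t-char1}) yields $\theta(z)=\theta_0(z)=z\,K(\bar\xi\mu)/K\mu$, so that $z\,K(\bar\xi\mu)=\theta(z)\,K\mu(z)$ as analytic functions on $\D$.

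Next I would substitute this relation into the formula for $\theta_\beta$: replacing $z\,K(\bar\xi\mu)$ by $\theta\,K\mu$ in both the numerator and the denominator and cancelling the common factor $K\mu$ gives $\theta_\beta = -\beta + (1-|\beta|^2)\,\theta/(1-\bar\beta\theta)$. Placing the two summands over the denominator $1-\bar\beta\theta$ produces the numerator $-\beta+\beta\bar\beta\theta+\theta-|\beta|^2\theta$; since $\beta\bar\beta=|\beta|^2$ the two middle terms cancel, leaving
\[
\theta_\beta = \frac{-\beta+\theta}{1-\bar\beta\theta}\,,
\]
which is the asserted transformation. I would note that $K\mu$ is nonvanishing and the relevant denominators do not vanish on $\D$, so the cancellation is legitimate, and the identity then holds throughout $\D$ by analyticity.

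Finally I would interpret the result. Writing $\varphi_\beta(w)=(w-\beta)/(1-\bar\beta w)$, the formula reads $\theta_\beta=\varphi_\beta\circ\theta$, and for $|\beta|<1$ the map $\varphi_\beta$ is a M\"obius automorphism of $\D$ carrying $\T$ onto $\T$; its pole $1/\bar\beta$ lies outside $\overline{\D}$, so $\varphi_\beta$ is continuous on $\overline{\D}$. Passing to non-tangential boundary values gives $\theta_\beta(\xi)=\varphi_\beta(\theta(\xi))$ for $\xi\in\T$ a.e.\ $m$, whence $|\theta(\xi)|=1$ a.e.\ if and only if $|\theta_\beta(\xi)|=1$ a.e., which is the simultaneous innerness. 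For the Blaschke statement I would use that $\varphi_\beta$ is a degree-one Blaschke product with inverse $\varphi_{-\beta}$, and that a composition of finite Blaschke products is again a finite Blaschke product: thus if $\theta_{\beta_0}=\varphi_{\beta_0}\circ\theta$ is a finite Blaschke product then $\theta=\varphi_{-\beta_0}\circ\theta_{\beta_0}$ is one, and hence so is every $\theta_\beta=\varphi_\beta\circ\theta$.

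I do not expect a serious obstacle here: the substance is the algebraic collapse of Theorem \ref{t-char3} via the special case $\theta=z\,K(\bar\xi\mu)/K\mu$, and the only points needing care are the non-vanishing of the denominators on $\D$ and the routine passage to boundary values required to phrase innerness as an almost-everywhere statement about moduli.
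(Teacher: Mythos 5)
Your proposal is correct and follows essentially the same route as the paper: the paper derives this corollary precisely by combining Theorem \ref{t-char3} with Theorem \ref{t-char1} (i.e.\ $zK(\bar\xi\mu)=\theta\,K\mu$) and performing the same algebraic collapse, with the innerness and Blaschke claims then read off from the fact that $w\mapsto(w-\beta)/(1-\bar\beta w)$ is a disc automorphism. Your write-up simply makes explicit the ``simple algebra'' and the non-vanishing of $K\mu$ and $1-\bar\beta\theta$ that the paper leaves to the reader.
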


\begin{rem}
The relation in the latter corollary is essentially analogous to formula (22)
in Livsic \cite{Livsic} in the case where the deficiency indices are $(1,1)$ and his parameter $\tau = -\beta$. The main focus of Livsic's paper was to find necessary and sufficient conditions for two simple partial isometries to be unitarily equivalent.
While the paper includes several results about the spectrum of certain unitary extensions, he did not consider rank one perturbations.
\end{rem}

\subsection{Representation for the characteristic function $\theta$ of $\hat U_0$}\label{s-reprtheta1}
We find a very simple representation of the characteristic function of $\hat U_0$ involving the normalized Cauchy transform. We derive several results from this simplification.

\begin{theo}\label{t-char1}
The characteristic function $\theta(z)$ of the cnu contraction $U_0$ is given by
\begin{align}\label{e-vartheta}
\theta(z) = z \calC_{\bar\xi\mu}= \gamma z \calC_{\bar\xi\mu_\gamma}\qquad\text{for }  z\in\D, |\gamma|=1.
\end{align}
\end{theo}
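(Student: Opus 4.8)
The plan is to obtain both equalities from results already in hand, with essentially no new analysis. For the first equality I would simply specialize Theorem \ref{t-char3} to $\beta=0$: the denominator collapses to $K\mu$ and the numerator to $z\,K(\bar\xi\mu)$, so that $\theta(z)=\theta_0(z)=z\,K(\bar\xi\mu)/K\mu$. Comparing with the definition \eqref{d-noCT} of the normalized Cauchy transform for $f=\bar\xi$ and $\gamma=1$ (so that $\mu_\gamma=\mu$), the quotient $K(\bar\xi\mu)/K\mu$ is exactly $\calC_{\bar\xi\mu}$, giving $\theta(z)=z\,\calC_{\bar\xi\mu}(z)$.

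For the second equality I would appeal to the Aronszajn--Krein type invariance of Theorem \ref{t-CTinv}, which asserts $\calC_{f\mu}=\calC_{f_\gamma\mu_\gamma}$ with $f_\gamma=V_\gamma f$. The crux is therefore to compute $V_\gamma\bar\xi$. Working in the spectral representation of Section \ref{s-beta}, I would first re-express $\bar\xi$ through $\hat U_\gamma$ rather than through $M_\xi$. From the alternative form $\hat U_\gamma=M_\xi+(\gamma-1)(\fdot,\bar\xi)\ci{L^2(\mu)}\ID_\xi$ one reads off $\hat U_\gamma^\ast=M_{\bar\xi}+(\bar\gamma-1)(\fdot,\ID_\xi)\ci{L^2(\mu)}\bar\xi$, and since $\mu$ is a probability measure (so $\|\ID_\xi\|\ci{L^2(\mu)}^2=\mu(\T)=1$) a one-line computation gives
\[
\hat U_\gamma^\ast\ID_\xi=\bar\xi+(\bar\gamma-1)\bar\xi=\bar\gamma\,\bar\xi,
\qquad\text{hence}\qquad
\bar\xi=\gamma\,\hat U_\gamma^\ast\ID_\xi .
\]

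I would then push this identity through $V_\gamma$. Since $V_\gamma\hat U_\gamma=M_\zeta V_\gamma$ together with unitarity of $V_\gamma$ forces $V_\gamma\hat U_\gamma^\ast=M_{\bar\zeta}V_\gamma$, and $V_\gamma\ID_\xi=\ID_\zeta$, we get
\[
V_\gamma\bar\xi=\gamma\,V_\gamma\hat U_\gamma^\ast\ID_\xi=\gamma\,M_{\bar\zeta}\,V_\gamma\ID_\xi=\gamma\,\bar\zeta .
\]
Thus $f_\gamma=\gamma\bar\xi$ (the function $\bar\zeta$ in $L^2(\mu_\gamma)$ being the same expression with the independent variable renamed), and the linearity of the Cauchy transform in Theorem \ref{t-CTinv} yields $\calC_{\bar\xi\mu}=\gamma\,\calC_{\bar\xi\mu_\gamma}$. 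Multiplying by $z$ and combining with the first equality gives $\theta(z)=z\,\calC_{\bar\xi\mu}=\gamma z\,\calC_{\bar\xi\mu_\gamma}$.

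The step I expect to carry the weight is the middle one: rewriting $\bar\xi=\gamma\,\hat U_\gamma^\ast\ID_\xi$, since it is precisely this reformulation in terms of $\hat U_\gamma$ (rather than $M_\xi$) that lets the intertwining property of $V_\gamma$ extract the clean scalar factor $\gamma$. Once that identity is in place, everything reduces to bookkeeping, provided one keeps track of the normalization $\mu(\T)=1$.
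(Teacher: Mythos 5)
Your proposal is correct and follows essentially the same route as the paper: the first equality by specializing the characteristic-function formula (the paper cites Theorem \ref{t-char}, you use its $n=1$ consequence Theorem \ref{t-char3} at $\beta=0$, which is the same thing), and the second equality via Theorem \ref{t-CTinv} together with the key identity $V_\gamma\bar\xi=\gamma\bar\zeta$, which is exactly the paper's equation \eqref{e-trivial}. The only cosmetic difference is that you establish this identity by first computing $\hat U_\gamma^\ast\ID_\xi=\bar\gamma\,\bar\xi$ and then applying $V_\gamma$, whereas the paper applies $V_\gamma$ first and solves a one-line equation for $V_\gamma\bar\xi$; both use the same intertwining relation and the normalization $\mu(\T)=1$.
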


\begin{proof}
The first equality follows immediately from Theorem \ref{t-char}.

For the second equality recall the definition of $V_\gamma:L^2(\mu)\to L^2(\mu_\gamma)$
and that $V_\gamma\ID_\xi=\ID_\zeta$ (see equation \eqref{d-Vgamma}). Here $\zeta$ denotes the independent variable of $L^2(\mu_\gamma)$.
Observe that
\begin{align}\label{e-trivial}
V_\gamma \bar\xi = \gamma\bar \zeta .
\end{align}
Indeed, the intertwining relationship $V_\gamma \hat U_\gamma = M_\zeta V_\gamma$ implies $V_\gamma \hat U_\gamma^\ast = M_{\bar \zeta} V_\gamma$. And together with the identity $\hat U_\gamma^\ast = M_{\bar\xi} + (\bar\gamma - 1) (\fdot, \ID_\xi)\ci{L^2(\mu)} \bar\xi$, it follows that $V_\gamma \bar\xi = V_\gamma M_{\bar\xi} \ID_\xi = M_{\bar \zeta} V_\gamma \ID_\xi + V_\gamma (1-\bar\gamma) (\ID_\xi,\ID_\xi)\ci{L^2(\mu)} \bar\xi  = \bar \zeta +  (1-\bar\gamma) V_\gamma \bar\xi$. It remains to solve for $V_\gamma\bar\xi$ and recall that $|\gamma|=1$.

For the expression in the middle of \eqref{e-vartheta} we use the fact that the Cauchy transform is invariant under a change of the perturbation parameter $\gamma$ by Theorem \ref{t-CTinv}. This, together with the identity \eqref{e-trivial}, yields $ \calC_{\bar\xi\mu} = \calC_{(V_\gamma \bar\xi)\mu_\gamma} =   \calC_{\gamma\bar\xi\mu_\gamma} = \gamma \calC_{\bar\xi\mu_\gamma}$, which completes the proof.
\end{proof}

\begin{rems}
(a) Equation \eqref{e-trivial} is a simple case of the representation theorem in \cite{mypaper}. We decided to provide a direct proof of this case for the convenience of the reader, since the representation theorem is formulated in the case of self-adjoint rank one perturbations there.\\
(b) In the case where the spectral measure $\mu$ is purely singular, the two representations of the characteristic function (the first equality of \eqref{e-herglotz} and \eqref{e-vartheta} for $\gamma =1$) agree verbatim.
\end{rems}

The following three corollaries are almost trivial consequences of \eqref{e-herglotz} and \eqref{e-vartheta}.

Recall that for $f\in L^2(\mu_\gamma)$, the normalized Cauchy transform $\calC_{f \mu_\gamma}$ is analytic in the open unit disc $\D$. It turns out that, if we take $f(\xi) = \bar\xi$, then $\calC_{\bar\xi\mu_\gamma}$ is also uniformly bounded on $\D$.

\begin{cor}\label{c-key}
We have $\calC_{\bar\xi\mu_\gamma} \in H^\infty(\D)$.
\end{cor}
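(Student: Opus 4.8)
The plan is to read the result off directly from the representation of the characteristic function established in Theorem \ref{t-char1}, which states that $\theta(z) = \gamma z\, \calC_{\bar\xi\mu_\gamma}(z)$ for all $z\in\D$ and all $\gamma$ with $|\gamma|=1$. Solving this identity for the normalized Cauchy transform gives
\[
\calC_{\bar\xi\mu_\gamma}(z) = \bar\gamma\,\frac{\theta(z)}{z}, \qquad z\in\D\setminus\{0\},
\]
where I have used $|\gamma|=1$ so that $1/\gamma = \bar\gamma$. Thus the entire content of the corollary is the assertion that the right-hand side extends to a bounded holomorphic function on all of $\D$, and the work has essentially already been done in Theorem \ref{t-char1}.

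The only point requiring attention is the behavior at the origin together with the uniform bound. Since $\theta$ is the characteristic function of a contraction, it lies in $H^\infty(\D)$ with $\|\theta\|\ci{H^\infty}\le 1$; moreover, the standing normalization $\Theta(0)=\OZ$ forces $\theta(0)=0$. This vanishing is also visible directly from the first equality $\theta(z) = z\,\calC_{\bar\xi\mu}(z)$ in Theorem \ref{t-char1}, which exhibits an explicit factor of $z$. Consequently $\theta$ has a zero at the origin, so the singularity of $\theta(z)/z$ at $z=0$ is removable and $\theta(z)/z$ is holomorphic throughout $\D$. By the Schwarz lemma applied to $\theta\colon\D\to\overline{\D}$ with $\theta(0)=0$ we have $|\theta(z)|\le|z|$ on $\D$, whence $|\theta(z)/z|\le 1$ everywhere in $\D$ (the case $\theta\equiv 0$ being trivial).

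Combining these observations, $\calC_{\bar\xi\mu_\gamma}(z) = \bar\gamma\,\theta(z)/z$ is holomorphic and bounded by $1$ on $\D$, so $\calC_{\bar\xi\mu_\gamma}\in H^\infty(\D)$ with $\|\calC_{\bar\xi\mu_\gamma}\|\ci{H^\infty}\le 1$. I do not expect any genuine obstacle here: the corollary is an almost immediate restatement of Theorem \ref{t-char1}, obtained by dividing by $\gamma z$ and invoking the normalization $\theta(0)=0$, and the only mildly nontrivial ingredient is the standard Schwarz-lemma bound that upgrades holomorphy of $\theta(z)/z$ to membership in the unit ball of $H^\infty(\D)$.
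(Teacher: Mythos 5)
Your proof is correct and follows essentially the same route as the paper's: both read the claim off the identity $\theta(z)=\gamma z\,\calC_{\bar\xi\mu_\gamma}(z)$ from Theorem \ref{t-char1} together with $\|\theta\|_{H^\infty}\le 1$, so that the only possible singularity is at the origin. The one (cosmetic) difference is how the origin is handled: the paper simply evaluates $\calC_{\bar\xi\mu_\gamma}(0)=\int_\T \bar\xi\,d\mu$ and bounds it by $1$ since $\mu$ is a probability measure, whereas your appeal to the Schwarz lemma via the normalization $\theta(0)=0$ is equally valid and has the small added benefit of giving the sharp bound $\|\calC_{\bar\xi\mu_\gamma}\|_{H^\infty}\le 1$ on all of $\D$.
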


\begin{proof}
Since $|\theta(z)|\le 1$ is an analytic self-map of the disc, the origin is the only place where \eqref{e-vartheta} allows $\calC_{\bar\xi\mu_\gamma}$ to have a singularity. But at the origin we have $|\calC_{\bar\xi\mu_\gamma}(0)| = |\int \bar\xi d\mu|\le 1$, because $\mu$ is a probability measure.
\end{proof}

The following two implications of Theorem \ref{t-char1} yield well-known results in the theory of rank one perturbations.
For example, it is easy to verify Poltoratski's theorem in the case of $f = \bar\xi$.

\begin{cor}\label{c-polt}(Special case of Poltoratski's theorem, see Theorem \ref{t-polt})
For $\gamma\in\T$, the non-tangential limit satisfies
\[
 \lim_{z\to \zeta }\calC_{\bar\xi\mu_\gamma}(z) \to \bar \zeta  \qquad \text{for } \zeta \in\T\text{ a.e.}~(\mu_\gamma)\ti{s}.
\]
\end{cor}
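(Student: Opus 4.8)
The statement is advertised as the special case $\tau=\mu_\gamma$, $f=\bar\xi$ of Poltoratski's Theorem~\ref{t-polt}, and the quickest route is simply to invoke that theorem directly: since $\mu_\gamma$ is a probability measure one has $\bar\xi\in L^\infty(\mu_\gamma)\subset L^1(\mu_\gamma)$ and $|\mu_\gamma|\ti{s}=(\mu_\gamma)\ti{s}$, so Theorem~\ref{t-polt} yields the conclusion verbatim. Because the corollary is billed as an \emph{implication} of Theorem~\ref{t-char1}, however, the plan I would actually carry out is to re-derive it from the representation \eqref{e-vartheta}, thereby confirming Poltoratski's theorem in this concrete instance without appealing to the general result.

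First I would rewrite the second equality in \eqref{e-vartheta} as $\calC_{\bar\xi\mu_\gamma}(z)=\bar\gamma\,\theta(z)/z$ for $z\in\D$. Letting $z\to\zeta$ non-tangentially with $\zeta\in\T$, one has $1/z\to\bar\zeta$, so the whole problem collapses to showing that the characteristic function $\theta$ has non-tangential boundary value $\gamma$ at $(\mu_\gamma)\ti{s}$-a.e.\ point $\zeta$. Granting this, $\calC_{\bar\xi\mu_\gamma}(z)\to\bar\gamma\cdot\gamma\cdot\bar\zeta=\bar\zeta$, which is exactly the assertion, and the remaining computation is elementary.

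The real work, and the step I expect to be the main obstacle, is this boundary statement for $\theta$. Here I would use the Herglotz representation from Subsection~\ref{sss-ACTheory}, $\frac{\gamma+\theta(z)}{\gamma-\theta(z)}=\int\ci\T\frac{\xi+z}{\xi-z}\,d\mu_\gamma(\xi)$, whose real part is the Poisson integral $P[\mu_\gamma]$. By the classical Fatou--de la Vall\'ee Poussin theorem on Poisson integrals of positive measures, $P[\mu_\gamma](z)\to+\infty$ non-tangentially at $(\mu_\gamma)\ti{s}$-a.e.\ $\zeta$; since the numerator $\gamma+\theta$ stays bounded (as $|\theta|\le 1$) while the Herglotz function blows up, this forces $\gamma-\theta(z)\to 0$, i.e.\ $\theta(z)\to\gamma$. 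Equivalently, one reads the same blow-up off \eqref{e-herglotz}, where $K\mu_\gamma=(1-\bar\gamma\theta)^{-1}$ becomes unbounded exactly where $\theta\to\gamma$. This pins the singular support of the Clark measure to the level set $\{\theta=\gamma\}$ and is precisely the analytic input that Poltoratski's theorem supplies in general; once it is in hand the corollary follows by the limit computation of the preceding paragraph. One could alternatively extract the same level-set information from the density formula of Theorem~\ref{t-delta}, but the Poisson-integral argument is cleaner and self-contained.
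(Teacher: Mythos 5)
Your proposal is correct and takes essentially the same route as the paper: the paper combines \eqref{e-herglotz} with \eqref{e-vartheta} to get $K\mu_\gamma(z) = (1-\bar\gamma\theta(z))^{-1} = (1- z\calC_{\bar\xi\mu_\gamma}(z))^{-1}$ and concludes from $|K\mu_\gamma(z)|\to\infty$ non-tangentially a.e.~$(\mu_\gamma)\ti{s}$ that $z\calC_{\bar\xi\mu_\gamma}(z)\to 1$, which is exactly your step ``$\theta(z)\to\gamma$'' since $\bar\gamma\theta(z) = z\calC_{\bar\xi\mu_\gamma}(z)$. Your Poisson-integral (Fatou) argument is just the standard justification of the blow-up that the paper dismisses with ``Clearly,'' so the two proofs coincide in substance, with yours being slightly more self-contained on that point.
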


\begin{proof}
 From \eqref{e-herglotz} and \eqref{e-vartheta} we obtain
\begin{align*}
 K\mu_\gamma(z) = (1- \bar\gamma\theta(z))^{-1}= (1- z\calC_{\bar\xi\mu_\gamma}(z))^{-1}.
\end{align*}
Clearly, we have $|K\mu_\gamma(z)| \to \infty$ as $z\to \zeta\in\T$ a.e.~$(\mu_\gamma)\ti{s}$. Hence, we have 
\[
|1- z\calC_{\bar\xi\mu_\gamma}(z)|\to 0
\]
as $z\to \zeta\in\T$ a.e.~$(\mu_\gamma)\ti{s}$.
\end{proof}

Since the characteristic function $\theta\in H^\infty(\D)$, we know that the non-tangential boundary values $\lim_{\D\ni z\to \zeta}\theta(z)$ exist $\zeta\in \T$ a.e.~$m$. The following result states that the non-tangential boundary limits exist on certain sets of Lebesgue measure zero, and what they are.

\begin{cor}
For every $\gamma\in \T$, the non-tangential boundary limits of the characteristic function $\theta(z)$ obey
	\[
	\lim_{z\to \zeta }\theta(z) = \gamma\qquad \text{for }\zeta \in\T \text{ a.e.}~(\mu_\gamma)\ti{s}.
	\] 
\end{cor}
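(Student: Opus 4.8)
The plan is to combine the representation of the characteristic function from Theorem~\ref{t-char1} with Poltoratski's theorem as specialized in Corollary~\ref{c-polt}. The key formula is $\theta(z) = \gamma z\, \calC_{\bar\xi\mu_\gamma}(z)$ for every $\gamma\in\T$, valid for $z\in\D$; the strategy is to take non-tangential boundary limits of both sides as $z\to\zeta$ and evaluate each factor separately on the relevant null set.

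First I would fix $\gamma\in\T$ and restrict attention to $\zeta\in\T$ a.e.~$(\mu_\gamma)\ti{s}$, the set where the singular part of $\mu_\gamma$ lives. On this set, Corollary~\ref{c-polt} already supplies the limiting behavior of the normalized Cauchy transform: $\lim_{z\to\zeta}\calC_{\bar\xi\mu_\gamma}(z) = \bar\zeta$ for $\zeta\in\T$ a.e.~$(\mu_\gamma)\ti{s}$. Next I would take the non-tangential limit in the factor $z$, which trivially tends to $\zeta$ as $z\to\zeta$. Multiplying the two limits together with the fixed constant $\gamma$ then gives
\[
\lim_{z\to\zeta}\theta(z) = \gamma\cdot\zeta\cdot\bar\zeta = \gamma
\qquad\text{for }\zeta\in\T\text{ a.e.}~(\mu_\gamma)\ti{s},
\]
since $|\zeta|=1$ forces $\zeta\bar\zeta=1$. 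This is precisely the claimed statement.

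The main point requiring care—and the only genuine obstacle—is justifying that the non-tangential limit of the product $\gamma z\,\calC_{\bar\xi\mu_\gamma}(z)$ factors as the product of the limits. Here this is unproblematic because $z\mapsto z$ is continuous up to the boundary (so its non-tangential limit is simply $\zeta$, a nonzero finite value), and the second factor has a genuine non-tangential limit $\bar\zeta$ a.e.~$(\mu_\gamma)\ti{s}$ by Corollary~\ref{c-polt}; the product of two non-tangentially convergent factors, one of which converges to a finite limit along the approach region, converges to the product of the limits. I would state this factorization explicitly to avoid any gap. One should also note that the exceptional null set depends on $\gamma$, which is already reflected in the ``a.e.~$(\mu_\gamma)\ti{s}$'' qualifier in the statement, so no uniformity in $\gamma$ is needed.
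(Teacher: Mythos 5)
Your proposal is correct and follows essentially the same route as the paper: the paper's proof likewise invokes Corollary \ref{c-polt} to get $\calC_{\bar\xi\mu_\gamma}(z)\to\bar\zeta$ a.e.~$(\mu_\gamma)\ti{s}$ and then recalls the identity \eqref{e-vartheta}, $\theta(z)=\gamma z\,\calC_{\bar\xi\mu_\gamma}(z)$. Your extra remarks on factoring the limit of the product and on the $\gamma$-dependence of the null set are fine but merely make explicit what the paper leaves implicit.
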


\begin{proof}
By virtue of the special case of Poltoratski's theorem (see Corollary \ref{c-polt}), we have $\calC_{\bar\xi\mu_\gamma}(z)\to \bar \zeta$ as $z\to \zeta$ for $\zeta\in\T$ a.e.~$(\mu_\gamma)\ti{s}$, and it remains only to recall equality \eqref{e-vartheta}.
\end{proof}

\begin{rems}
(a) The latter corollary with an alternative proof can be found in \cite{cimaross}, see Corollary 9.1.24.\\
(b) Given a function $f\in H^\infty(\D)$ with $\|f\|\ci{H^\infty} \le 1$ and $f(0)=0$, one can use the latter corollary to show the existence (and obtain the value) of the non-tangential boundary value at some $\zeta\in\T$. Indeed, this can be done for some $|\gamma|=1$ by finding a corresponding Aleksandrov--Clark measure $\mu_\gamma$ that has a point mass at $\zeta$. One can then proceed in a similar way using the singular continuous part of $\mu_\gamma$.
\end{rems}

\subsection{Radial jump behavior of the normalized Cauchy transform across the unit circle}\label{s-jump}
It follows from Poltoratski's theorem that the non-tangential boundary values of the normalized Cauchy transform
\[
\lim_{z\to\xi, |z|<1}\calC_{f\mu}(z)
\qquad\text{and}
\qquad
\lim_{z\to\xi, |z|>1}\calC_{f\mu}(z)
\]
(from the inside and outside, respectively) coincide a.e.~$(\mu_\gamma)\ti{s}$. However, Poltoratski's theorem does not provide any information about the boundary values on the absolutely continuous part $(\mu_\gamma)\ti{ac}$. On that part of $\T$, the difference of the non-tangential boundary values of the normalized Cauchy transform (from the inside and outside, respectively) is some non-zero function, since $\calC_{f\mu}$ is an analytic function on $\C\backslash\T$. Equation \eqref{e-jump} below provides a simple explicit expression for this jump for non-tangential boundary limits.

The idea is to use the two representations
(equations \eqref{e-herglotz} and \eqref{e-vartheta}) of $\theta$
in order to obtain information about the jump behavior of the normalized Cauchy transform of the measure $\bar\xi \mu$.

For $\xi\in\T$ we use the notation
\[
 K^\pm\mu(\xi) = \lim_{z\to \xi, |z| \gtrless 1} K\mu(z)
\]
to denote the non-tangential boundary limits of the Cauchy transform from the outside and inside, respectively.
Similarly, for the non-tangential limits of the normalized Cauchy transform, consider
\[
 \calC_{f\mu}^\pm\mu(\xi) = \lim_{z\to\xi, |z| \gtrless 1} \calC_{f\mu}(z),
\]
where $f\in L^2(\mu)$. We denote the (non-tangential) jump of the normalized Cauchy transform across $\T$ at $\xi\in\T$ by
\begin{align*}
[[\calC_{f\mu}]] (\xi)= \calC^-_{f\mu}(\xi) - \calC^+_{f\mu}(\xi).
\end{align*}

\begin{theo}
For a non-negative Borel measure $\mu$, the non-tangential jump behavior of $\calC_{\bar\xi\mu}$ across the unit circle $\T$ is given by
\begin{align}\label{e-jump}
[[\calC_{\bar\xi\mu}]] (\xi) = \frac{1}{\xi(K^+\mu(\xi))(K^-\mu(\xi))}\, \frac{d\mu}{dm}(\xi)
\qquad\text{for }\xi\in\T\text{ a.e.~}m.
\end{align}
\end{theo}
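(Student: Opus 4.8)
The plan is to exploit the two different representations of the characteristic function $\theta$, namely equation \eqref{e-vartheta} giving $\theta(z) = z\,\calC_{\bar\xi\mu}(z)$ and the Herglotz-type formula \eqref{e-herglotz} giving $K\mu(z) = (1-\theta(z))^{-1}$ (taking $\gamma=1$). Combining these yields $\calC_{\bar\xi\mu}(z) = \theta(z)/z = (1 - (K\mu(z))^{-1})/z$ for $z \in \D$. The key observation is that the Cauchy transform $K\mu$ extends to all of $\C\setminus\T$ via the integral formula \eqref{d-Cauchy}, and the same algebraic relationship should persist on the exterior region $|z|>1$ as well, so that $\calC_{\bar\xi\mu}(z) = (1 - (K\mu(z))^{-1})/z$ holds for all $z\in\C\setminus\T$, not merely inside the disc. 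First I would verify this extension: the identity $\theta(z)=z\calC_{\bar\xi\mu}(z)$ was derived for $z\in\D$, but both $K\mu$ and $K(\bar\xi\mu)$ are defined on $\C\setminus\T$, so I would argue directly from the definitions \eqref{d-Cauchy} and \eqref{d-noCT} that $\calC_{\bar\xi\mu} = K(\bar\xi\mu)/K\mu$ satisfies the same functional relation on the exterior.

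Once the formula $\calC_{\bar\xi\mu}(z) = \dfrac{1}{z}\left(1 - \dfrac{1}{K\mu(z)}\right)$ is established on both sides of $\T$, I would compute the jump by taking non-tangential limits from inside and outside. Writing the difference
\[
[[\calC_{\bar\xi\mu}]](\xi) = \calC^-_{\bar\xi\mu}(\xi) - \calC^+_{\bar\xi\mu}(\xi)
= \frac{1}{\xi}\left(\frac{1}{K^+\mu(\xi)} - \frac{1}{K^-\mu(\xi)}\right),
\]
where the factor $1/z$ contributes the common boundary value $1/\xi$ (the map $z\mapsto 1/z$ being continuous across $\T$ away from $0$), the problem reduces to computing the jump of the reciprocal Cauchy transform. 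Putting the two terms over a common denominator gives
\[
[[\calC_{\bar\xi\mu}]](\xi) = \frac{1}{\xi}\cdot\frac{K^-\mu(\xi) - K^+\mu(\xi)}{(K^+\mu(\xi))(K^-\mu(\xi))}.
\]
Comparing with the desired expression \eqref{e-jump}, it remains exactly to identify the numerator jump $K^-\mu(\xi) - K^+\mu(\xi)$ with $\frac{d\mu}{dm}(\xi)$ a.e.~$m$.

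This last identification is the classical Sokhotski--Plemelj jump formula for the Cauchy transform, which I would invoke (or prove via the Poisson-kernel characterization of $\frac{d\mu}{dm}$): the difference of the inner and outer non-tangential boundary values of $K\mu$ recovers the Radon--Nikodym density of the absolutely continuous part of $\mu$ with respect to $m$, a.e.~$m$. The hard part will be justifying the interchange of boundary limits in the reciprocal and securing that $K^\pm\mu(\xi)$ are finite and nonzero a.e.~$m$ so that the division in \eqref{e-jump} is legitimate; here I would note that on the absolutely continuous part $K\mu$ has finite nonvanishing boundary values a.e., while on the singular part both $K^\pm\mu$ blow up and the density $\frac{d\mu}{dm}$ vanishes, so the formula holds trivially (as $0$) there. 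Assembling the common factor $1/\xi$, the Plemelj jump $d\mu/dm$, and the denominator $(K^+\mu)(K^-\mu)$ then produces \eqref{e-jump} precisely.
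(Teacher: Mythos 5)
Your proposal is correct and follows essentially the same route as the paper: the paper likewise combines \eqref{e-herglotz} and \eqref{e-vartheta} into the identity $\calC_{\bar\xi\mu}(z)=\frac{1}{z}\bigl[1-\frac{1}{K\mu(z)}\bigr]$ on $\C\backslash\T$, regroups the two boundary limits over a common denominator, and concludes with Fatou's (Privalov's) jump theorem, which is exactly your Sokhotski--Plemelj step. Your extra care in justifying the identity on $|z|>1$ directly from the definitions (it amounts to the algebraic relation $zK(\bar\xi\mu)(z)=K\mu(z)-1$, valid on all of $\C\backslash\T$ for a probability measure $\mu$) addresses a point the paper passes over silently, so no gap remains.
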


\begin{proof}
Using the two representations of $\theta$, equations \eqref{e-herglotz} and \eqref{e-vartheta},
we see that
\begin{align}\label{overK}
   \calC_{\bar\xi\mu} (z) = \frac{1}{z}\left[1-\frac{1}{K\mu(z)}\right]
\qquad\text{for }z\in\C\backslash \T.
\end{align}
Recall that $\calC_{\bar\xi\mu}\in H^\infty(\D)$ by Corollary \ref{c-key}.

Re-grouping the limits (and noticing that $K^\pm\mu(\xi)\neq 0$ for $\xi\in\T$ a.e.~$m$), we obtain
\[
 [[\calC_{\bar\xi\mu}]] (\xi ) = \frac{1}{\xi }\left[\frac{1}{K^+\mu(\xi )}-\frac{1}{K^-\mu(\xi )}\right] = \frac{K^-\mu(\xi )\,-\,K^+\mu(\xi )}{\xi (K^+\mu(\xi ))(K^-\mu(\xi ))}
\]
for $\xi \in\T$ a.e.~$m$.

Recall Fatou's jump theorem (see for example, Corollary 2.4.2 of \cite{cimaross}) which is also known as Privalov's theorem, we have
\[
K^-\mu(\xi )\,-\,K^+\mu(\xi ) = \frac{d\mu}{dm}(\xi )\qquad\text{ for }\xi \in\T \text{ a.e.~}m.
\]

Application of this statement to the numerator yields the theorem.
\end{proof}

\begin{rems}
(a) Note that equality \eqref{overK} does not violate Poltoratski's theorem (Theorem \ref{t-polt}) because $|K\mu(\xi )|=\infty$ for $\xi \in\T$ a.e.~$\mu\ti{s}$.\\
(b) In order to verify that the latter theorem is reasonable in a special case, let us assume that the closed $\supp\mu\ti{ac}\neq \T$. Then it is well-known (see \cite{DSS}) that the characteristic function obeys $|\theta(\xi )|=1$ for $\xi \in \T\backslash \supp\mu\ti{ac}$ and that it possesses a so-called pseudo-analytic continuation ``across'' those points (in $\T\backslash \supp\mu\ti{ac}$). In particular, the normalized Cauchy transform cannot have a jump at those points. This is in agreement with the vanishing of the term $\frac{d\mu}{dm}(\xi )$ on the right hand side of equation \eqref{e-jump} at those points.
\end{rems}

\providecommand{\bysame}{\leavevmode\hbox to3em{\hrulefill}\thinspace}
\providecommand{\MR}{\relax\ifhmode\unskip\space\fi MR }
\providecommand{\MRhref}[2]{%
  \href{http://www.ams.org/mathscinet-getitem?mr=#1}{#2}
}
\providecommand{\href}[2]{#2}


\begin{thebibliography}{10}

\bibitem{AbaLiawPolt}
E.~Abakumov, C.~Liaw, A.~Poltoratski{\u\i}, \emph{Cyclic vectors for rank one perturbations and Anderson-type Hamiltonians}. Submitted, also see {\tt arXiv:1111.3095}.

\bibitem{BallLubin}
J.~A.~Ball and A.~Lubin, \emph{On a class of contractive perturbations of restricted shifts}, Pacific J.~Math.~\textbf{63} (1976), no.~2, 309--323. \MR{0417824 (54 \#5872)}

\bibitem{CP}
R.~W.~Carey and J.~D.~Pincus, \emph{Unitary equivalence modulo the trace class
  for self-adjoint operators}, Amer.~J.~Math.~\textbf{98} (1976), no.~2,
  481--514. \MR{0420323 (54 \#8337)}

\bibitem{cimaross}
J.~A.~Cima, A.~L.~Matheson, and W.~T.~Ross, \emph{The {C}auchy
  transform}, Mathematical Surveys and Monographs, vol.~125, Amer.~Math.~Soc., Providence, RI, 2006. \MR{2215991 (2006m:30003)}

\bibitem{Clark}
D.~N.~Clark, \emph{One dimensional perturbations of restricted shifts}, J.~Anal.~Math., \textbf{25} (1972), 169--191. \MR{0301534 (46 \#692)}

\bibitem{Mixed}
R.~del~Rio, S.~Fuentes, and A.~Poltoratski, \emph{Families of spectral measures with mixed types}, Operator methods in ordinary differential equations (Stockholm 2000), 131--140, Oper.~Theory Adv.~Appl., \textbf{132}, Birkh\"auser, Basel, 2002.

\bibitem{DSS}
R.~G.~Douglas, H.~S.~Shapiro, A.~L.~Shields, \emph{Cyclic vectors and invariant subspaces for the backwards shift operator}, Ann.~Inst.~Fourier (Grenoble), \textbf{20} (1970), 37--76. \MR{42\#5088} 

\bibitem{fuhrmann}
P.~A.~Fuhrmann, \emph{On a class of finite dimensional contractive perturbations of restricted shifts of finite multiplicity}, Israel J.~Math., \textbf{16} (1973), 1629--175. \MR{0336401 (49 \#1176)}

\bibitem{NPPOL}
V.~Jak{\v{s}}i{\'c} and Y.~Last, \emph{A new proof of {P}oltoratskii's
  theorem}, J.~Funct.~Anal., \textbf{215} (2004), no.~1, 103--110. \MR{2085111
  (2005d:47027)}

\bibitem{Kapustin-Polt}
V.~Kapustin and A.~Poltoratski, \emph{Boundary convergence of vector-valued pseudocontinuable functions}, J.~Funct.~Anal., \textbf{238} (2006), no.~1, 313--326.
  \MR{2253018}

\bibitem{katobook}
T.~Kato, \emph{Perturbation theory for linear operators}, Classics in
  Mathematics, Springer-Verlag, Berlin, 1995, Reprint of the 1980 edition.
  \MR{1335452 (96a:47025)}

\bibitem{mypaper}
C.~Liaw and S.~Treil, \emph{{Rank one perturbations and singular integral
  operators}}, J.~Funct.~Anal., \textbf{257} (2009), no.~6,
  1947--1975.

\bibitem{Livsic}
M.~S.~Liv{\v{s}}ic, \emph{Isometric operators with equal deficiency indices,
  quasi-unitary operators}, Mat. Sbornik N.S. \textbf{26(68)} (1950), 247--264.
  \MR{0034960 (11,669e)}. Alternatively, Amer.~Math.~Soc.~Transl.~(2) \textbf{13} (1960), 85--103.
  \MR{0113143 (22 \#3981b)}

\bibitem{mishko}
M.~Mitkovski, \emph{On a connection between Naimark's dilation theorem, spectral representations and characteristic functions}, to appear in Indiana Univ.~Math.~J.

\bibitem{poltsara2006}
A.~Poltoratski{\u\i} and D.~Sarason, \emph{Aleksandrov-{C}lark measures}, Recent
  advances in operator-related function theory, Contemp.~Math., \textbf{393}, Amer.~Math.~Soc., Providence, RI, 2006, pp.~1--14. \MR{2198367 (2006i:30048)}

\bibitem{NONTAN}
A.~G.~Poltoratski{\u\i}, \emph{Boundary behavior of pseudocontinuable
  functions}, Algebra i Analiz \textbf{5} (1993), no.~2, 189--210, engl.
  translation in \emph{St. Petersburg Math. J.}, 5(2): 389--406, 1994.
  \MR{1223178 (94k:30090)}

\bibitem{SIMREV}
B.~Simon, \emph{Spectral analysis of rank one perturbations and applications}, in Mathematical Quantum Theory. II. Schr\"odinger Operators, Vancouver, BC (1993), in CRM Proc.~Lecture Notes, {\bf 8}, Amer.~Math.~Soc., Providence, RI (1995), 109--149. 

\bibitem{SIMWOL}
B.~Simon and T.~Wolff, \emph{Singular continuous spectrum under rank one
  perturbations and localization for random {H}amiltonians}, Comm.~Pure Appl.~Math., \textbf{39} (1986), no.~1, 75--90. \MR{820340 (87k:47032)}

\bibitem{SzNF2010}
B.~Sz.-Nagy, C.~Foia{\c s}, H.~Bercovici, and L.~K{\'e}rchy, \emph{Harmonic
  analysis of operators on {H}ilbert space}, second ed., Universitext,
  Springer, New York, 2010. \MR{2760647}. Original edition: B.~Sz.-Nagy and C.~Foia{\c s}, \emph{Analyse harmonique des op\'erateurs de
  l'espace de {H}ilbert}, Masson et Cie, Paris, 1967. \MR{0225183 (37 \#778)}. Alternatively, translated from the French and revised, North-Holland
  Publishing Co., Amsterdam, 1970. \MR{0275190 (43 \#947)}

\bibitem{weyl}
H.~Weyl, \emph{\"{U}ber gew\"ohnliche {D}ifferentialgleichungen mit
  {S}ingularit\"aten und die zugeh\"origen {E}ntwicklungen willk\"urlicher
  {F}unktionen}, Math.~Ann., \textbf{68} (1910), no.~2, 220--269. \MR{1511560}

\end{thebibliography}
\end{document}